\newtheorem{theorem}{Theorem}
\newtheorem{lemma}[theorem]{Lemma}
\newtheorem{prop}[theorem]{Proposition}
\newtheorem{cor}[theorem]{Corollary}
\theoremstyle{definition}
\theoremstyle{plain}
\def\al{\alpha}
\def\la{\lambda}
\def\be{\beta}
\def\De{\Delta}
\def\lcm{\operatorname{lcm}}
\newcommand{\ep}[2]{\equiv{#1}\pmod{#2}}
\newcommand{\ve}[2]{\binom{#1}{#2}}
\date{}
\title[Lucas sequence terms divisible by their indices]{The terms in Lucas sequences divisible by their indices}
\author{ Chris Smyth}
\address{School of Mathematics and Maxwell Institute for Mathematical Sciences, University of Edinburgh,
    James Clerk Maxwell Building, King's Buildings,
    Mayfield Road, Edinburgh EH9 3JZ, UK.}
\begin{document}
\subjclass[2000]{Primary 11B39}
\keywords{Lucas sequences, indices
}
\maketitle
\begin{abstract}
For Lucas sequences of the first kind $(u_n)_{n\ge 0}$ and second kind $(v_n)_{n\ge 0}$ defined as usual by $u_n=(\al^n-\be^n)/(\al-\be)$, $v_n=\al^n+\be^n$, where $\al$ and $\be$ are either integers or conjugate quadratic integers, we describe the sets $\{n\in\mathbb N:n \text{ divides } u_n\}$ and $\{n\in\mathbb N:n \text{ divides } v_n\}$. Building on earlier work, particularly that of Somer, we show that the numbers in these sets can be written as a product of a so-called {\it basic} number, which can only be $1$, $6$ or $12$, and particular primes, which are described explicitly. Some properties of the set of all  primes
that arise in this way is also given, for each kind of sequence.
\end{abstract}
\section{Introduction}
Given integers $P$ and $Q$, let $\al$ and $\be$ be the roots of the equation 
$$
x^2-Px+Q=0.
$$
 Then the well-known {\it Lucas sequence of the first kind} (or {\it generalised Fibonacci sequence}) $(u_n)_{n\ge 0}$ is given by  $u_0=0,u_1=1$ and $u_{n+2}=Pu_{n+1}-Qu_n$ for $n\ge 0$, or explicitly by  Binet's formula
$$u_n=\frac{\al^n-\be^n}{\al-\be}
$$
when $\De=(\al-\be)^2=P^2-4Q\ne 0$, and $u_n=n\al^{n-1}$ when $\De=0$. In this latter case $\al$ is an integer, and so $n$ divides $u_n$ for all $n\ge 1$. In Theorem \ref{S-Basic} below we describe, for all pairs $(P,Q)$,  the set $S=S(P,Q)$ of all $n\ge 1$ for which $n$ divides $u_n$.

Corresponding to Theorem \ref{S-Basic}  we have a similar result (Theorem \ref{T-Basic}  below)  for the {\it Lucas sequence of the second kind}  $(v_n)_{n\ge 0}$, given by  $v_0=2,u_1=P$ and $v_{n+2}=Pv_{n+1}-Qv_n$ for $n\ge 0$, or explicitly by the formula
$$
v_n=\al^n+\be^n,
$$
finding the set $T=T(P,Q)$ of all $n\ge 1$ for which $n$ divides $v_n$. The results  for the set $T$ are given in Section \ref{S-T}.

For $n\in S$, define $\mathcal P_{S,n}$ to be the set of primes $p$ such that $np\in S$.  We call an element $n$ of $S$  {\it (first kind) basic} if there is no prime $p$ such that $n/p$ is in $S$.  We shall see that, for given $P,Q$,  there are at most two basic elements of $S$. It turns out that all elements of $S$ are generated from {basic} elements using  primes from these sets.

\begin{theorem} \label{S-Basic} \begin{enumerate}

\item[(a)]For $n\in S$, the set $\mathcal P_{S,n}$ is the set of primes dividing $u_n\De$. 

\item[(b)] Every element of $S$ can be written in the form $bp_1\dots p_r$ for some $r\ge 0$, where $b\in S$ is basic and, for $i=1,\dots,r$, the numbers $bp_1\dots p_{i-1}$ are also in $S$, and $p_i$ is in $\mathcal P_{S,bp_1\dots p_{i-1}}$.
\item[(c)] The (first kind) basic elements of $S$ are
\begin{itemize}
\item $1$ and $6$ if $P\ep{3}{6}$, $Q\ep{\pm 1}{6}$;
\item $1$ and $12$ if $P\ep{\pm 1}{6}$, $Q\ep{-1}{6}$;
\item $1$ only, otherwise.
\end{itemize}
\end{enumerate}
\end{theorem}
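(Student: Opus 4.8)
The plan is to reduce the whole theorem to the arithmetic of the \emph{rank of apparition} $\rho(q)=\min\{m\ge1:q\mid u_m\}$ (with $\rho(q)=\infty$ when $q$ divides no $u_m$) and of the $q$-adic valuations $v_q(u_m)$. First I would collect the standard facts: $u_m\mid u_n$ whenever $m\mid n$; for $q\nmid Q$ one has $q\mid u_m$ iff $\rho(q)\mid m$; for odd $q$, $u_q\equiv(\Delta/q)\pmod q$ (Legendre symbol), so $\rho(q)=q$ precisely when $q\mid\Delta$ and otherwise $\rho(q)\mid q-(\Delta/q)$; a lifting-the-exponent formula $v_q(u_m)=v_q(u_{\rho(q)})+v_q(m)-v_q(\rho(q))$ for odd $q\nmid Q$ with $\rho(q)\mid m$; the behaviour at $q=2$, where $\rho(2)\in\{2,3,\infty\}$ is determined by the parities of $P,Q$ and $v_2(u_m)$ vanishes unless $\rho(2)\mid m$, rising in step with $v_2(m)$ otherwise; and the degenerate cases $q\mid Q$, where $q\nmid P$ makes $q$ divide no $u_m$ while $q\mid\gcd(P,Q)$ forces $q\mid\Delta$ with $v_q(u_m)$ growing at least linearly. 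The point is that, for a prime $q\mid n$, the inequality $v_q(u_n)\ge v_q(n)$ (which is what ``$q^{v_q(n)}\mid u_n$'' says) is automatic when $q\mid\gcd(P,Q)$, impossible when $q\mid Q$ and $q\nmid P$, and equivalent to $\rho(q)\mid n$ otherwise; so, away from primes dividing $\gcd(P,Q)$, membership of $n$ in $S$ is governed purely by the divisibilities $\rho(q)\mid n$ for $q\mid n$.

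Parts (a) and (b) then go quickly. For (a), if $p\mid n$ then $p\mid n\mid u_n$, so $p\mid u_n\Delta$; for $p\nmid n$ the remaining conditions for $np\in S$ follow from $n\mid u_n\mid u_{np}$, so one only has to show $np\in S\iff p\mid u_n$ or $p\mid\Delta$. In one direction use $u_n\mid u_{np}$ (if $p\mid u_n$) or $\rho(p)=p\mid np$ (if $p\mid\Delta$, $p$ odd), the cases $p=2$ and $p\mid\gcd(P,Q)$ being immediate; in the other, $\rho(p)\mid np$ with $p\nmid n$ forces $\rho(p)\mid n$ unless $p\mid\rho(p)$, which for odd $p\nmid Q$ means $\rho(p)=p$, i.e.\ $p\mid\Delta$, while $p=2$ is settled by noting that for $P,Q$ odd one has $2\mid u_m\iff3\mid m$. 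Part (b) is induction on $n$: since $1\in S$ is basic, take $r=0$ for $n$ basic; and a non-basic $n\in S$ has, by definition of basic, a prime $p$ with $n/p\in S$, so apply the inductive hypothesis to $n/p$ and append $p$, observing that $p\in\mathcal P_{S,n/p}$ is just the statement $(n/p)p=n\in S$.

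The core is (c), which I would prove in three steps. \emph{Step 1: no basic $n>1$ has a prime factor $\ge5$.} If $p$ is the largest prime factor of $n$ and $p\ge5$, I claim $n/p\in S$, a contradiction. By the reduction above, removing one factor of $p$ can spoil $v_q(u_{n/p})\ge v_q(n/p)$ only for a prime $q\mid n/p$ with $q\nmid\gcd(P,Q)$, and then (for $q$ odd, $q\nmid Q$) only if $\rho(q)\mid n$ but $\rho(q)\nmid n/p$, which forces $p^{v_p(n)}\mid\rho(q)\le q+1$, hence $q\ge p-1$; since $q\le p$ and $q\ne p$ this gives $q=p-1$, impossible for a prime $p\ge5$. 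The primes $q=2,3$ have $\rho(q)\le4<p$, so $p\nmid\rho(q)$ and $\rho(q)$ still divides $n/p$. \emph{Step 2: a basic $n>1$ is $6$ or $12$.} By Step 1, $n=2^a3^b$; it is not a prime power (removing one factor of a prime power in $S$ leaves a smaller prime power, or $1$, still in $S$), so $a,b\ge1$. Repeating the removal analysis: deleting a $3$ can eject $n$ from $S$ only through the prime $2$, requiring $3^b\mid\rho(2)\le3$, whence $b=1$; deleting a $2$ only through the prime $3$, requiring $2^a\mid\rho(3)\le4$, whence $a\le2$. So $n\in\{6,12\}$. \emph{Step 3: the congruence conditions.} For $n=6$, basicness is ``$6\in S$, $3\notin S$, $2\notin S$'': $2\notin S$ gives $P$ odd; then $Q$ must be odd (else $2\nmid u_6$), so $\rho(2)=3$; and $3\notin S$ together with $3\mid u_6$ forces $\rho(3)=2$ (so $3\mid P$) and $3\nmid Q$ — precisely $P\equiv3\pmod6$, $Q\equiv\pm1\pmod6$, and these conditions conversely make $6$ basic. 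For $n=12$, basicness is ``$12\in S$, $6\notin S$, $4\notin S$'': with $P$ odd, $u_4=P(P^2-2Q)$ is odd, so $4\notin S$ automatically; $6\notin S$ forces $3\nmid u_6$, which with $\rho(3)\mid12$ and $\rho(3)\le4$ gives $\rho(3)=4$, i.e.\ $3\nmid P$ and $Q\equiv2\pmod3$; and $12\in S$ needs $\rho(2)=3$, i.e.\ $P,Q$ odd — altogether $P\equiv\pm1\pmod6$, $Q\equiv-1\pmod6$, which again suffice. In every other congruence class of $(P,Q)$ neither $6$ nor $12$ is basic, so $1$ is the only basic element.

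The step I expect to fight with is the prime $2$ — pinning down $v_2(u_m)$ precisely in each of the three parity regimes for $(P,Q)$, which is what justifies the ``rising in step'' statement used throughout Steps 1 and 2 — together with the degenerate case $\gcd(P,Q)>1$, where the clean divisibility-sequence picture breaks down and one has to lean instead on the facts that such primes divide $\Delta$ and have linearly growing valuations in the $u_m$ (so they neither obstruct membership in $S$ nor survive in a basic number as a factor $\ge5$).
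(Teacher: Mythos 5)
Your plan is sound and, in outline, it works; but it is a genuinely different route from the paper's. The paper proves part (a) by a short identity argument: $u_{np}=u_p^{(n)}u_n$ together with Lemma \ref{L-pD} ($p\mid u_p$ iff $p\mid\De$) applied to the sequence with roots $\al^n,\be^n$, whose discriminant is $u_n^2\De$ — no rank of apparition or lifting-the-exponent facts are needed. For the descent behind (b)--(c) the paper does not argue from scratch: it quotes Somer's theorem (Theorem \ref{S-down}), which gives $n/p_{\max}\in S$ except when $P$ is odd and $n=2^\ell\cdot3$, and adds one new ingredient (Lemma \ref{L-12684}(i) and Proposition \ref{P-23}) handling $\ell\ge3$, so the descent can only stop at $1$, $6$ or $12$; the congruences in (c) are then read off from $u_6=(P^2-Q)(P^2-3Q)P$ and $v_6$. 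You instead reprove the descent via the rank of apparition $\rho(q)$ and the law of repetition — essentially reproving Somer's theorem — and your Steps 1--2 are a correct self-contained version of it. Your treatment of (b) is also a useful observation: as literally stated, (b) is immediate by induction from the definition of basic, whereas the paper folds into its proof of (b) the harder fact (needed only for (c)) that the descent stops only at $1$, $6$ or $12$. What the paper's route buys is brevity and confinement of the delicate $2$-adic analysis to the one place it is needed; what yours buys is independence from Somer's result and a single mechanism (the equivalence, for $q\mid n$ with $q\nmid Q$, between $q^{\nu_q(n)}\mid u_n$ and $\rho(q)\mid n$, where $\nu_q$ is the $q$-adic valuation) that delivers (a), Corollary \ref{C-mn} and the descent uniformly.

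Some points in your sketch need tightening, all fixable. (i) The $2$-adic input must be sharper than ``rising in step with $\nu_2(m)$'': for $P,Q$ odd you need $\nu_2(u_6)\ge2$ (both $u_3=P^2-Q$ and $v_3=P(P^2-3Q)$ are even) and $v_{2k}\ep{2}{4}$ whenever $v_k$ is even, giving $\nu_2(u_{3\cdot2^k})=\nu_2(u_6)+k-1$; this is exactly the content the paper isolates in Lemma \ref{L-12684}(i)/Proposition \ref{P-23}, and it is what permits $12$, and not only $6$, to be basic. (ii) In (a), the case $p\mid n$ requires showing $np\in S$, not merely that $p\mid u_n\De$; this does follow from your reduction, but your one-line remark does not do it. Likewise in Step 1 the case $q=p$ (when $p^2\mid n$) is not excluded by ``$q\ne p$''; it is killed by the same forcing, since then $p^{\nu_p(n)}\mid\rho(q)\le q+1$ with $\nu_p(n)\ge2$ is absurd. (iii) In the $n=12$ analysis, ``$P$ odd'' should be derived first — if $P$ is even then $4\in S$ (e.g.\ by Corollary \ref{C-mn}), contradicting $4\notin S$ — after which $Q$ odd follows from $2\mid u_{12}$; as written, the claim ``$12\in S$ needs $\rho(2)=3$'' is only true once $P$ is known to be odd.
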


Note that the primes in part (b) need not be distinct.

Somer \cite[Theorem 4]{MR1271392} has many  results in the direction of this theorem. In particular, he already noted the importance of $6$ and $12$ for this problem.  Walsh \cite[unpublished]{Wa} gave an equivalent categorization of  $S(1,-1)$ (the Fibonacci numbers case), where $1$ and $12$ are the  basic elements of $S(1,-1)$.

Note that if $\al$ and $\be$ are integers, then at least one of $P,Q$ is even, so that $1$ is the only basic element in this case. In this case, too, it is known (see Andr{\'e}-Jeannin \cite{MR1131414}) that $S=\{n\,:\,n\mid\alpha^n-\beta^n\}$. (His result is stated assuming that $(n,\alpha\beta)=1$, and his proof given for $n$ square-free).  This follows straight from Proposition \ref{P-AJ} below.

Now let $\mathcal P_S$ be the set of primes $p$ that divide some $n$ in $S$. It is easy to see that $\mathcal P_S=\cup_{n\in S}\mathcal P_{S,n}$.
It is interesting to compare $\mathcal P_{S,n}$ and $\mathcal
P_{S,np}$ for $n$ and $np$ in $S$. Write $u_n=u_n(\al,\be)$ to show
the dependence of $u_n$  on $\al$ and $\be$, and denote
$u_n(\al^k,\be^k)$ by $u_n^{(k)}$. Then since
\begin{equation}\label{E-prod} u_{kn}=u_k^{(n)}u_n,
\end{equation}
we have $u_{n}\mid u_{np}$, so that $\mathcal P_{S,n}\subset \mathcal
P_{S,np}$ by Theorem \ref{S-Basic}(b). Thus when we multiply $n\in S$
by a succession of primes according to Theorem \ref{S-Basic}(b) to
stay within $S$, the associated set $\mathcal P_{S,n}$ does not lose any
primes. Hence we obtain the following consequence of
 Theorem \ref{S-Basic}(a).

\begin{cor} \label{C-mn} If $n\in S$ and all prime factors of $m$ divide $u_n\De$, then $nm\in S$.
\end{cor}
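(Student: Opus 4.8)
The plan is to prove the statement by induction on $\Omega(m)$, the number of prime factors of $m$ counted with multiplicity. The base case $\Omega(m)=0$ is trivial: then $m=1$, and $nm=n\in S$ by hypothesis. So suppose $\Omega(m)\ge 1$ and that the assertion holds for all smaller values.

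For the inductive step I would write $m=pm'$, where $p$ is a prime factor of $m$ and $\Omega(m')=\Omega(m)-1$. By hypothesis $p$ divides $u_n\De$, so $p\in\mathcal P_{S,n}$ by Theorem \ref{S-Basic}(a), and hence $np\in S$ by the very definition of $\mathcal P_{S,n}$. The key point is then that the hypothesis is inherited by the pair $(np,m')$: every prime factor of $m'$ is a prime factor of $m$, hence divides $u_n\De$, and from \eqref{E-prod} (taken with $k=p$) we have $u_{np}=u_p^{(n)}u_n$, so $u_n\mid u_{np}$; since $\De$ is unchanged, every prime dividing $u_n\De$ divides $u_{np}\De$. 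Thus every prime factor of $m'$ divides $u_{np}\De$, and as $np\in S$ and $\Omega(m')<\Omega(m)$ the inductive hypothesis yields $(np)m'=nm\in S$, completing the induction.

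I do not expect a real obstacle here: the whole argument rests on the monotonicity observation that the set of primes dividing $u_n\De$ can only grow when $n$ is replaced by $np$, which is precisely what \eqref{E-prod} together with Theorem \ref{S-Basic}(a) supplies; everything else is a bookkeeping induction. (One could equally phrase the conclusion as $\mathcal P_{S,n}\subseteq\mathcal P_{S,nm}$ along the way, but this is not needed for the statement as stated.)
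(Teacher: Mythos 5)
Your proof is correct and is essentially the paper's own argument: the paper deduces the corollary by multiplying $n$ by the prime factors of $m$ one at a time, using Theorem \ref{S-Basic}(a) together with the observation from \eqref{E-prod} that $u_n\mid u_{np}$, so that $\mathcal P_{S,n}\subset\mathcal P_{S,np}$ and no primes are lost along the way. Your induction on $\Omega(m)$ is just a formalized version of that same bookkeeping.
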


This is a strengthening of the known result (see e.g., \cite[Theorem 5(i)]{MR1271392}) that if $n\in S$ and $m$ all
prime factors of $m$ divide $n\De$, then $nm\in S$. In particular ($n=1$) $\Delta\in S$ and, for $n\in S$, both
$u_n=n\cdot (u_n/n)\in S$ and $u_n\Delta\in S$.

In Section \ref{S-finite} we give the conditions on $P$ and $Q$ that make $S$, $\mathcal P_S$, $T$ or $\mathcal P_T$ finite. In Section \ref{S-divisibility} we briefly discuss divisibility properties of the sequences $S$ and $T$. These properties are useful for generating the sequences efficiently.

It is of interest to estimate $\{n\in S: n\le x\}$ and $\{n\in T: n\le x\}$. It is planned to do this in a forthcoming paper of Shparlinski and the author.  For $\mathcal P_S$ infinite (and not the set $\mathcal P$ of all primes!)  it would also  of interest to estimate the relative density of $\mathcal P_S$ in $\mathcal P$. But this seems to be a more difficult problem (as does the corresponding problem for $T$).

For an interesting survey of many results on Lucas numbers, see Ribenboim \cite{MR1352481}. For a more general reference on recurrence sequences see the book \cite{MR1990179} by Everest, van der Poorten, Shparlinski, and Ward.
\section{ Preliminary results for $S$.}

While Theorem \ref{S-Basic}(b) allows us to multiply $n\in S$ by
the primes in $\mathcal P_{S,n}$ to stay within $S$, a vital ingredient in proving
Theorem \ref{S-Basic}(c) is to be able to do the opposite: to divide
$n\in S$ by a prime and stay within $S$. This is provided by the
following significant result, due to Somer, generalising special
cases due to Jarden \cite[Theorem E]{MR0197383}, Hoggatt and
Bergum \cite{MR0349567} and Walsh \cite{Wa} for the Fibonacci  sequence (i.e., $P=1$,
$Q=-1$) and Andr\'e-Jeannin \cite{MR1131414} for $\gcd(P,Q)=1$.

\begin{theorem}[{{Somer \cite[Theorem 5(iv)]{MR1271392}}}]\label{S-down} Let $n\in S$, $n>1$, with $p_{\max\nolimits}$ its
largest prime factor. Then, except in the case that $P$ is odd and $n$ is of the form $2^\ell\cdot 3$ for some
$\ell\ge 1$, we have  $n/p_{\max\nolimits}\in S$.
\end{theorem}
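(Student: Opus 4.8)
The plan is to argue prime by prime. Put $m=n/p$ with $p=p_{\max}$, and recall that $k\in S$ iff $\ell^{v_\ell(k)}\mid u_k$ for every prime $\ell\mid k$, where $v_\ell(\cdot)$ denotes the $\ell$-adic valuation. For $\ell\neq p$ one has $v_\ell(m)=v_\ell(n)$, while $v_p(m)=v_p(n)-1$. The basic tool is the classical divisibility structure of $(u_k)$: for a prime $\ell\nmid Q$ let $\rho_\ell$ be the rank of apparition (the least $k\ge1$ with $\ell\mid u_k$) and $e_\ell=v_\ell(u_{\rho_\ell})\ge1$; then $\ell\mid u_k\iff\rho_\ell\mid k$, and, for odd $\ell$ with $\ell\nmid\Delta$, the ``lifting the exponent'' law gives $\ell^a\mid u_k\iff\nu_\ell(a)\mid k$, where $\nu_\ell(a)=\rho_\ell\,\ell^{\max(0,\,a-e_\ell)}$ (note $v_\ell(\rho_\ell)=0$ since $\rho_\ell\mid\ell-(\tfrac{\Delta}{\ell})$). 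The degenerate sub-cases $\ell\mid\Delta$ (then $\rho_\ell=\ell$ and $\nu_\ell(a)=\ell^a$) and $\ell\mid\gcd(P,Q)$ (then $v_\ell(u_k)$ grows linearly in $k$ and imposes no constraint) are handled directly. Thus $n\in S$ says exactly that $\nu_\ell(v_\ell(n))\mid n$ for every $\ell\mid n$, and we must prove the same with $n$ replaced by $m$.

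Odd primes cause no trouble. Fix an odd prime $\ell\mid m$; we may assume $\ell\nmid Q$ and $\ell\nmid\Delta$, the other cases being handled above. From $\ell\mid u_n$ we get $\rho_\ell\mid n=pm$, and I claim $\rho_\ell\mid m$. If not, then $p\mid\rho_\ell$; but $\rho_\ell\mid\ell-(\tfrac{\Delta}{\ell})$ forces $\rho_\ell\le\ell+1\le p$, whence $\rho_\ell=p=\ell+1$ with $(\tfrac{\Delta}{\ell})=-1$ --- impossible, since $\ell$ and $\ell+1$ are not both odd primes. Given $\rho_\ell\mid m$, write $a=v_\ell(n)=v_\ell(m)$: then $v_\ell(\nu_\ell(a))=\max(0,a-e_\ell)\le a$, while every prime $q\neq\ell$ dividing $\nu_\ell(a)$ divides $\rho_\ell\mid n$, so $q<p$ and $v_q(\nu_\ell(a))\le v_q(\rho_\ell)\le v_q(n)=v_q(m)$; hence $\nu_\ell(a)\mid m$. (For $\ell=p$ the argument is the same, using $p\nmid\rho_p$; the refinement $p\mid\Delta$ is trivial from $\nu_p(a)=p^a$.) So any obstruction must occur at $\ell=2$.

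The prime $2$ and the exception. If $2\mid Q$ but $2\nmid P$, every $u_k$ is odd, so $2$ divides no element of $S$; if $2\mid\gcd(P,Q)$ the degenerate estimate applies. Otherwise $2\nmid Q$, and inspecting $u_1,u_2,u_3$ shows $\rho_2=2$ when $P$ is even and $\rho_2=3$ when $P$ is odd. The tool is now the identity $u_{2k}=u_k(\al^k+\be^k)$. When $P$ is even, a routine computation with it determines $v_2(u_k)$ exactly for $k=2^jk_0$ ($k_0$ odd), and one checks that the resulting condition $\nu_2(v_2(n))\mid n$ is inherited by $m$ in all cases. When $P$ is odd, $2\mid u_n$ forces $3\mid n$; we need $3\mid m$, and $3\nmid m$ together with $3\mid pm$ forces $p=3$ and $v_3(n)=1$, i.e.\ $n=2^\ell\cdot3$ --- exactly the exceptional family (where $m=2^\ell$ with $3\nmid m$, so $2\nmid u_m$ and $m\notin S$). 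Outside the exception $3\mid m$, so $\al^m+\be^m$ is even while $Q^m$ is odd; hence the cofactor $u_p^{(m)}=u_n/u_m$ --- the $p$-th term of the Lucas sequence of the first kind with parameters $(\al^m+\be^m,\,Q^m)$ --- is odd (a two-line induction modulo $2$, exactly as in the computation of $\rho_2$). Therefore $v_2(u_m)=v_2(u_n)\ge v_2(n)=v_2(m)$, and $m\in S$.

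The main obstacle is precisely this $2$-adic analysis. For odd $\ell$ everything falls out of the inequality $\rho_\ell\le\ell+1\le p_{\max}$, but at $\ell=2$ one must follow $v_2(u_k)$ through $u_{2k}=u_k(\al^k+\be^k)$ across the parity cases of $(P,Q)$, and it is exactly when $P$ and $Q$ are both odd --- so that $\rho_2=3$, and $3$ is forced to be the largest prime of an $n\in S$ that is $\{2,3\}$-smooth with $3$ to the first power --- that removal of $p_{\max}$ can fail, producing the family $n=2^\ell\cdot3$.
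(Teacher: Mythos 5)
Your proposal is correct, and it is essentially the approach the paper itself only sketches (the theorem being quoted from Somer): translate $n\in S$ into rank-of-apparition conditions via the law of repetition, observe that for an odd prime $\ell$ every prime factor of $\rho_\ell\mid \ell-\left(\tfrac{\Delta}{\ell}\right)$ is smaller than $p_{\max}$, and trace the failure to the $2$-adic anomaly $\rho_2=3$ when $P$ is odd, which is exactly what produces the exceptional family $n=2^\ell\cdot 3$. Two harmless imprecisions you may wish to polish: for odd $\ell\mid\Delta$ the exact criterion is $\ell^a\mid u_k\iff \ell^{\,1+\max(0,a-e_\ell)}\mid k$, which can be weaker than your stated $\ell^a\mid k$ (though the condition at such primes is automatically satisfied either way, so nothing breaks), and odd irregular primes ($\ell\mid Q$, $\ell\nmid P$) should be dismissed explicitly, since they divide no $u_k$ with $k\ge 1$ and hence no element of $S$.
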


We produce a variant of  this result to cover all but two of the exceptional cases, as follows.

\begin{prop} If $P$ is odd and $n=2^\ell\cdot 3\in S$, where $\ell\ge 3$, then $n/2\in S$.
\end{prop}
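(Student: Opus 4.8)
The plan is to verify the two divisibilities $2^{\ell-1}\mid u_{n/2}$ and $3\mid u_{n/2}$ separately; since $n/2=2^{\ell-1}\cdot 3$ with $\gcd(2^{\ell-1},3)=1$, their conjunction is precisely $n/2\in S$. First I would observe that, $P$ being odd and $2^\ell\mid n\mid u_n$, the parameter $Q$ is odd as well: were $Q$ even, a one-line induction on $u_{k+2}=Pu_{k+1}-Qu_k$ would show every $u_k$ ($k\ge1$) is odd, contradicting $2\mid u_n$. Reducing the two recurrences modulo $2$ then shows that $u_k$ and $v_k$ are even exactly when $3\mid k$. Throughout write $\operatorname{ord}_2$ for the $2$-adic valuation.

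For the power of $2$, set $m=2^{\ell-1}$, so $n=6m$ and $n/2=3m$. The identity $u_{2j}=u_jv_j$ (the case $k=2$ of \eqref{E-prod}, since $u_2(\al^j,\be^j)=v_j$) gives $u_n=u_{6m}=v_{3m}\,u_{3m}$, so it is enough to show $\operatorname{ord}_2(v_{3m})=1$; then $\operatorname{ord}_2(u_{3m})=\operatorname{ord}_2(u_n)-1\ge\ell-1$, i.e.\ $2^{\ell-1}\mid u_{3m}$. For this I would invoke the identity $v_{3j}=v_j^3-3Q^jv_j=v_j(v_j^2-3Q^j)$, immediate from Binet's formula, taken at $j=m$. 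As $m$ is a power of $2$ it is not a multiple of $3$, so $v_m$ is odd and $v_m^2\equiv1\pmod 8$; and as $m$ is even with $Q$ odd, $Q^m=(Q^{m/2})^2\equiv1\pmod 8$. Hence $v_m^2-3Q^m\equiv-2\pmod 8$, so $\operatorname{ord}_2(v_{3m})=\operatorname{ord}_2(v_m^2-3Q^m)=1$. (It is here that the evenness of $m$, i.e.\ $\ell\ge2$, is used; for odd $m$ one only gets $v_m^2-3Q^m\equiv 1-3Q^m$, which may be divisible by $4$ — already $v_3=P(P^2-3Q)$ is a multiple of $4$ when $Q\ep{-1}{4}$.)

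For the power of $3$, let $\rho$ be the rank of apparition of $3$ in $(u_k)$; it exists because $3\mid u_n$, and $3\mid u_k\iff\rho\mid k$. Standard Lucas-sequence theory bounds $\rho$: the case $3\mid Q$, $3\nmid P$ cannot occur here, since it forces $u_k\ep{P^{k-1}}{3}\ne0$ for all $k\ge1$; in the remaining cases $\rho=2$ if $3\mid\gcd(P,Q)$, $\rho=3$ if $3\mid\De$ and $3\nmid Q$, and $\rho$ divides $2$ or $4$ (according as $\De$ is or is not a square modulo $3$) if $3\nmid\De Q$ — so $\rho\le4$ always. Since $\rho\mid n=2^\ell\cdot3$ and $\rho\le4<2^\ell$, the full power $2^\ell$ cannot divide $\rho$; thus $\operatorname{ord}_2(\rho)\le\ell-1$ and $\operatorname{ord}_3(\rho)\le1$, whence $\rho\mid2^{\ell-1}\cdot3=n/2$ and $3\mid u_{n/2}$. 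This is exactly where the hypothesis $\ell\ge3$ is needed: for $\ell=2$ one may have $\rho=4\nmid6=n/2$, as happens for the Fibonacci sequence, where indeed $6\notin S(1,-1)$.

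Combining the two parts yields $n/2=2^{\ell-1}\cdot3\mid u_{n/2}$, i.e.\ $n/2\in S$. The step I expect to be the main obstacle is the \emph{upper} bound $\operatorname{ord}_2(v_{3m})\le1$ (equivalently $4\nmid v_{3m}$): everything rests on the congruence $v_m^2-3Q^m\equiv-2\pmod8$ and hence on $m$ being even. One must also be a little careful to justify the bound $\rho(3)\le4$, although that is classical.
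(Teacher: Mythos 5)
Your proof is correct, and it reaches the conclusion by a somewhat different route than the paper. The paper isolates the needed $2$-adic and mod-$3$ facts in Lemma \ref{L-12684}, stated for the fixed indices $12,6$ and $8k,4k$, and then transfers them to general $\ell$ via the shifted sequence $u^{(2^{\ell-2})}$ and the factorization $u_{2^\ell\cdot 3}=u_{12}^{(2^{\ell-2})}v_{2^{\ell-3}}\cdots v_2v_1$, the point being that all $v_{2^j}$ are odd when $P$ is odd. You instead split off only the top factor, $u_{6m}=u_{3m}v_{3m}$ with $m=2^{\ell-1}$, and show $\operatorname{ord}_2(v_{3m})=1$ directly from $v_{3m}=v_m(v_m^2-3Q^m)$ and a mod $8$ computation using $Q$ odd, $3\nmid m$ and $m$ even; this is really the same congruence information as the paper's proof that $v_6\equiv 2\pmod 4$ for the shifted sequence, but your organization avoids Lemma \ref{L-12684}(i) and the shifted-sequence bookkeeping. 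For the prime $3$ you replace Lemma \ref{L-12684}(ii) (a mod-$3$ case analysis on the $4$-shifted sequence) by the classical bound that the rank of apparition $\rho$ of $3$ lies in $\{2,3,4\}$ in all cases compatible with $3\mid u_n$, whence $\rho\mid 2^{\ell-1}\cdot 3$ for $\ell\ge 3$ and $3\mid u_\rho\mid u_{n/2}$; again the underlying fact (periodicity of $u_k\bmod 3$) is the same, but your phrasing is arguably more transparent about exactly where $\ell\ge 3$ enters, and your remarks on the failure at $\ell=2$ (Fibonacci, $\rho=4$) match the paper's exceptional cases.

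One small imprecision, not a gap: the blanket claim ``$3\mid u_k\iff\rho\mid k$'' is false when $3$ is special (i.e.\ $3\mid\gcd(P,Q)$), since then every $u_k$ with $k\ge 2$ is divisible by $3$ while $\rho=2$. This does not hurt you: the direction you actually use, $\rho\mid k\Rightarrow 3\mid u_k$, always holds because $u_\rho\mid u_k$ by \eqref{E-prod}, and in the special case $\rho=2$ divides $n/2$ trivially; the reverse implication is only needed (and is valid) when $3\nmid Q$. You might also phrase the $2$-adic step purely in terms of divisibility rather than $\operatorname{ord}_2$ to cover the degenerate possibility $u_n=0$ (e.g.\ $P=Q=1$), though since your congruence shows $v_{3m}\ne 0$ the argument goes through unchanged.
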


The idea of the proof of  Theorem \ref{S-down} is roughly (i.e., ignoring some details)  as follows. Let $n$ have prime factorization $n=\prod_pp^{k_p}$, with $\omega(n)$, the {\it rank of appearance of $n$}, being the least integer $k$ such that $n\mid u_k$. Then $n\mid u_n$ is equivalent to $\omega(n)\mid n$. Since $\omega(n)=\lcm_p\omega\left(p^{k_p}\right)$, and every $\omega\left(p^{k_p}\right)$ is of the form $p^{k'_p}\ell_p$, where $k'_p<k_p$ and $\ell_p\mid(p^2-1)$, it follows that $n\mid u_n$ is equivalent to
\begin{equation}\label{E-omega}
\lcm_{p\mid n}\left(p^{k'_p}\ell_p\right)\mid n=\prod_{p\mid n}p^{k_p}.
\end{equation}
But since for $p>2$ all prime factors of $p^2-1$ are less than $p$, and $2^2-1=3$, if equation (\ref{E-omega}) holds, it will still hold with $n$ replaced by $n/p_{\max\nolimits}$ when $p_{\max\nolimits}>3$ or $p_{\max\nolimits}=3$ and ($n$ odd or $2\mid n$ with $\ell_2=1$). When $p_{\max\nolimits}=3$ and $2\mid n$ with $\ell_2=3$, (\ref{E-omega})
will still hold with $n$ replaced by $n/3$ as long as $n/3$ is divisible by $3$.

For the proof of Theorem \ref{S-Basic}, we first need the following, which  dates back to Lucas \cite[page 295]{MR1505176} and Carmichael \cite[Lemma II]{MR1502458}. It is the special case $n=1$ of Theorem \ref{S-Basic}(a).
\begin{lemma} \label{L-pD} For any prime $p$, $p$ divides $u_p$ if and only if $p$ divides $\De$.
\end{lemma}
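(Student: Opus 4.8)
The plan is to reduce the whole statement to the single algebraic identity
\begin{equation*}
2^{p-1}u_p=\sum_{j=0}^{(p-1)/2}\binom{p}{2j+1}P^{\,p-1-2j}\,\De^{\,j},
\end{equation*}
valid for \emph{every} pair $(P,Q)$, including the degenerate case $\De=0$. I would obtain it by writing $\al,\be=(P\pm\sqrt{\De})/2$, expanding $\al^p$ and $\be^p$ by the binomial theorem, subtracting, and dividing through by $\al-\be=\sqrt{\De}$: only the odd-index terms survive, and the surviving powers of $\sqrt{\De}$ are even, leaving integer powers of $\De$. (When $\De=0$ one checks directly that both sides equal $2^{p-1}p\,\al^{p-1}$, using $P=2\al$.) Note that $u_p$, $P$ and $\De$ are rational integers even when $\al,\be$ are only conjugate quadratic integers, so all the congruences below take place in $\integers$.

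The prime $p=2$ is handled separately and trivially: $u_2=P$, so $2\mid u_2\iff 2\mid P\iff 2\mid P^2-4Q=\De$. For odd $p$ I would reduce the identity modulo $p$. Since $p\mid\binom{p}{k}$ for $0<k<p$, every term with $1\le 2j+1\le p-2$ vanishes, and so does the $j=0$ term $\binom{p}{1}P^{p-1}=pP^{p-1}$; only the top term $\binom{p}{p}\De^{(p-1)/2}=\De^{(p-1)/2}$ remains. Hence $2^{p-1}u_p\equiv\De^{(p-1)/2}\pmod p$, and since $2^{p-1}\equiv1\pmod p$ by Fermat's little theorem, $u_p\equiv\De^{(p-1)/2}\pmod p$. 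Finally, by Euler's criterion $\De^{(p-1)/2}\equiv0\pmod p$ precisely when $p\mid\De$ --- otherwise it is $\equiv\pm1$ --- which yields $p\mid u_p\iff p\mid\De$.

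Since this is the classical result of Lucas and Carmichael, I do not anticipate a genuine obstacle; the only points needing care are confirming that the displayed identity remains valid when $\De=0$, so that the single argument really covers all $(P,Q)$, and observing that cancelling the factor $2^{p-1}$ modulo an odd prime is legitimate exactly because $u_p\in\integers$.
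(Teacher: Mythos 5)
Your proof is correct and rests on the same core computation as the paper's: a binomial expansion combined with $p\mid\binom{p}{j}$ for $0<j<p$, leading to the key congruence $u_p\equiv\De^{(p-1)/2}\pmod{p}$, from which the lemma is immediate (with $p=2$ handled separately via $u_2=P\equiv\De\pmod 2$ in both arguments). The only cosmetic differences are that you expand $\al^p-\be^p$ with $\al,\be=(P\pm\sqrt{\De})/2$ to obtain an explicit identity in $\integers$, then cancel $2^{p-1}$ by Fermat's little theorem and get the case $\De=0$ for free, whereas the paper expands $(\al-\be)^p$ directly, writing the correction terms as $\binom{p}{j}(-1)^jQ^ju_{p-2j}$, and disposes of $\De=0$ trivially.
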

\begin{proof} Now $u_2=P$ and $\De=P^2-4Q\ep{u_2}{2}$, so the result is true for $p=2$. The result is trivial for $\De=0$. Now for $\De\ne 0$ and $p\ge 3$,
\begin{align*}
\De^{(p-1)/2}&=\frac{(\al -\be)^{p}}{(\al-\be)}\\
&=u_p+\sum_{j=1}^{p-1}\binom{p}{j}\al^{p-j}(-\be)^j/(\al-\be)\\
&=u_p+\sum_{j=1}^{(p-1)/2}\binom{p}{j}(-1)^jQ^ju_{p-2j}\\
&\ep {u_p}{p},
\end{align*}
giving the result.
\end{proof}

We have the following.

A prime is called {\it irregular} if it divides $Q$ but not $P$. Clearly $p\nmid \Delta$ for $p$ irregular. A prime that is not irregular is called {\it regular}.
\begin{lemma}[{{Lucas \cite[pp. 295--297]{MR1505176}}},  {{Carmichael \cite[Theorem XII]{MR1502458}}}, {{Somer \cite[Proposition 1(viii)]{MR1271392}}}]
\label{L-eps}
If $p$ is an odd prime with $p\nmid Q$, $p\nmid \Delta$, then $p\mid
u_{p-\varepsilon}$, where $\varepsilon$ is the Legendre symbol
$\left(\frac{\Delta}{p}\right)$. On the other hand, if $p$ is irregular then it does not divide any $u_n$, $n\ge 1$.
\end{lemma}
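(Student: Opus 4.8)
The plan is to treat the two assertions separately: the statement about irregular primes is essentially immediate, while the main assertion will follow from two congruences for $u_p$ and $v_p$ modulo $p$ together with a standard addition formula. For the irregular case, suppose $p\mid Q$ but $p\nmid P$. Reducing the recurrence $u_{n+2}=Pu_{n+1}-Qu_n$ modulo $p$ gives $u_{n+2}\equiv Pu_{n+1}\pmod p$, so starting from $u_1=1$ an immediate induction yields $u_n\equiv P^{n-1}\pmod p$ for every $n\ge 1$; since $p\nmid P$, no $u_n$ with $n\ge1$ is divisible by $p$.

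For the main assertion I would first record two facts modulo $p$. The computation in the proof of Lemma~\ref{L-pD} shows $u_p\equiv\De^{(p-1)/2}\pmod p$, so Euler's criterion (applicable since $p\nmid\De$) gives $u_p\equiv\varepsilon\pmod p$. Writing $v_p=\al^p+\be^p=\al^p+(P-\al)^p$ inside $\integers[\al]$ and using that $\binom pj\equiv0\pmod p$ for $0<j<p$, together with Fermat's little theorem, gives $v_p\equiv P^p\equiv P\pmod p$. (Alternatively these follow by reducing the sequence in $\mathbb F_p$ when $\varepsilon=1$, and in $\mathbb F_{p^2}$ — where the Frobenius map $z\mapsto z^p$ interchanges the two roots — when $\varepsilon=-1$.) Next I would invoke the identity $u_mv_n=u_{m+n}+Q^nu_{m-n}$, which is immediate from Binet's formula once one notes $u_{-k}=-Q^{-k}u_k$. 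Taking $(m,n)=(p,1)$ and $(m,n)=(1,p)$ gives
\[
Pu_p=u_{p+1}+Qu_{p-1},\qquad v_p=u_{p+1}-Qu_{p-1},
\]
whence $2u_{p+1}=Pu_p+v_p$ and $2Qu_{p-1}=Pu_p-v_p$. Reducing modulo $p$ and substituting $u_p\equiv\varepsilon$ and $v_p\equiv P$ yields $2u_{p+1}\equiv P(\varepsilon+1)$ and $2Qu_{p-1}\equiv P(\varepsilon-1)\pmod p$. If $\varepsilon=1$, the second congruence with $\gcd(p,2Q)=1$ forces $p\mid u_{p-1}$; if $\varepsilon=-1$, the first congruence with $p$ odd forces $p\mid u_{p+1}$. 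In either case $p\mid u_{p-\varepsilon}$, as claimed.

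The argument is routine; the only point needing slight care is the evaluation of $v_p$ modulo $p$, since the quadratic identity $v_p^2-\De u_p^2=4Q^p$ only determines $v_p$ up to sign. The direct binomial expansion of $\al^p+\be^p$ (or the Frobenius description) removes this ambiguity, after which the two linear relations dispatch both Legendre-symbol cases uniformly.
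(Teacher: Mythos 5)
Your proof is correct. Note that the paper does not actually prove this lemma: it is quoted from the literature (Lucas, Carmichael, Somer) and used as an ingredient elsewhere, so there is no in-paper argument to compare against line by line. Your argument is, however, very much in the spirit of the computations the paper does carry out nearby: the congruence $u_p\equiv\De^{(p-1)/2}\pmod p$ is exactly the display in the proof of Lemma \ref{L-pD}, and your evaluation $v_p\equiv P\pmod p$ is the same fact the paper derives as $v_p\equiv v_1^p\pmod p$ in Lemma \ref{L-pP}(i) (your binomial expansion in $\integers[\al]$ descends to $\integers$ since $p\integers[\al]\cap\integers=p\integers$; alternatively you could just cite the paper's symmetric-function expansion \eqref{E-pP}, which keeps everything rational). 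The rest is clean: the identity $u_mv_n=u_{m+n}+Q^nu_{m-n}$ is verified directly from Binet (legitimate here because $p\nmid\De$ forces $\De\ne0$), the specialisations give $Pu_p=u_{p+1}+Qu_{p-1}$ (which is just the recurrence) and $v_p=u_{p+1}-Qu_{p-1}$, and the two cases $\varepsilon=\pm1$ use only $p\nmid 2Q$ and $p$ odd, both guaranteed by hypothesis; the case $p\mid P$ causes no trouble since then both right-hand sides vanish mod $p$. The irregular-prime half via $u_n\equiv P^{n-1}\pmod p$ is also correct. So this is a valid, self-contained proof of a statement the paper leaves to its references.
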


The following result follows straight for Lemmas \ref{L-pD} and \ref{L-eps}.

\begin{cor} \label{C-reg} The set $\mathcal P_{{\operatorname{1st}}}$ of primes that divide some $u_n$, $n\ge 1$ consists precisely of the regular primes.
\end{cor}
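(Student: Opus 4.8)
The plan is to establish the two inclusions separately, the interesting one being that every regular prime actually occurs. The inclusion $\mathcal P_{\operatorname{1st}}\subseteq\{\text{regular primes}\}$ is immediate from the last sentence of Lemma~\ref{L-eps}: an irregular prime divides no $u_n$ with $n\ge 1$, so any prime that does divide some $u_n$ is necessarily regular.

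For the reverse inclusion, let $p$ be a regular prime; I would argue by cases. Suppose first that $p$ is odd. If $p\mid\Delta$, then Lemma~\ref{L-pD} gives $p\mid u_p$ at once. If $p\nmid\Delta$, then regularity of $p$ leaves two possibilities: either $p\nmid Q$, and then Lemma~\ref{L-eps} yields $p\mid u_{p-\varepsilon}$ with $\varepsilon=\left(\frac{\Delta}{p}\right)=\pm 1$, so that $p-\varepsilon\ge 2$ is a legitimate index; or $p\mid Q$, in which case regularity forces $p\mid P$ as well, hence $p\mid P^2-4Q=\Delta$, contradicting $p\nmid\Delta$. Thus this last sub-case does not arise, and every odd regular prime divides some $u_n$. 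It remains to treat $p=2$: being regular means $2\mid P$ or $2\nmid Q$. In the first case $2\mid u_2=P$; in the second case ($P$ then necessarily odd) $u_3=P^2-Q$ is even, so $2\mid u_3$. This covers all regular primes and completes the proof.

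I expect no genuine obstacle: every step is a direct appeal to Lemma~\ref{L-pD} or Lemma~\ref{L-eps}, or a one-line parity check. The only point demanding a little care is organising the case split so that the hypotheses of Lemma~\ref{L-eps} (namely $p$ odd, $p\nmid Q$, $p\nmid\Delta$) genuinely hold each time it is invoked, together with the trivial observation that the resulting index $p-\varepsilon$ is $\ge 1$, so that $u_{p-\varepsilon}$ is an honest term of the sequence.
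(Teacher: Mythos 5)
Your proof is correct and follows exactly the route the paper intends: the paper gives no details beyond saying the corollary ``follows straight from'' Lemmas~\ref{L-pD} and \ref{L-eps}, and your case analysis (including the separate parity check for $p=2$, where Lemma~\ref{L-eps} does not apply) is precisely the fleshed-out version of that argument.
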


\begin{lemma}[{{Somer \cite[Theorem 5(ii)]{MR1271392}}}]\label{L-lcm}
 If $m,n\in S$ then $\lcm(m,n)\in S$.
 \end{lemma}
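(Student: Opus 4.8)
The plan is to give a short, self-contained argument that uses only the product formula \eqref{E-prod}; no discussion of ranks of apparition or of the exceptional moduli $6,12$ is needed for this particular statement.

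First I would record the elementary consequence of \eqref{E-prod} that $a\mid b$ implies $u_a\mid u_b$. Writing $b=ka$, formula \eqref{E-prod} gives $u_b=u_k^{(a)}u_a$, where $u_k^{(a)}$ is the $k$-th term of the Lucas sequence of the first kind associated with the numbers $\alpha^a,\be^a$; these satisfy $\alpha^a+\be^a=v_a\in\integers$ and $\alpha^a\be^a=Q^a\in\integers$, so $\alpha^a,\be^a$ again fit the standing hypotheses, $u_k^{(a)}$ is a rational integer, and $u_b=u_k^{(a)}u_a$ is an identity in $\integers$. Hence $u_a\mid u_b$. (In the degenerate case $\De=0$ one uses $u_n=n\alpha^{n-1}$ instead, for which the divisibility $u_a\mid u_b$ is clear and \eqref{E-prod} still holds.)

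Now let $m,n\in S$ and put $L=\lcm(m,n)$. Since $m\mid L$ and $n\mid L$, the previous step gives $u_m\mid u_L$ and $u_n\mid u_L$. Because $m\in S$ and $n\in S$ we have $m\mid u_m$ and $n\mid u_n$, so $m\mid u_L$ and $n\mid u_L$, whence $\lcm(m,n)\mid u_L$, that is $L\mid u_L$. This is exactly the assertion $L=\lcm(m,n)\in S$.

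I do not expect any genuine obstacle here: the only points that need a word of justification are the integrality of $u_k^{(a)}$ (equivalently, that $\alpha^a,\be^a$ again meet the hypotheses on $(P,Q)$) and the routine verification that \eqref{E-prod} remains valid when $\De=0$. After that the conclusion is immediate divisibility bookkeeping, and in particular the argument is insensitive to whether $\gcd(P,Q)=1$.
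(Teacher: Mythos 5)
Your proof is correct and follows essentially the same route as the paper: both use the factorization \eqref{E-prod} to get $u_m\mid u_L$ and $u_n\mid u_L$, then combine with $m\mid u_m$, $n\mid u_n$ to conclude $L\mid u_L$. Your extra remarks on the integrality of $u_k^{(a)}$ and the degenerate case $\De=0$ are fine but not essential; the paper leaves these implicit.
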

 \begin{proof} Put $\ell=\lcm(m,n)$. From  (\ref{E-prod}) we have $u_n\mid u_\ell$, $u_m\mid u_\ell$, so $n\mid u_n$, $m\mid u_m$ and hence $\ell\mid u_\ell$.
\end{proof}
\begin{lemma}
\label{L-easy}
If $P$ and $Q$ are integers and $p$ is a prime not dividing $\gcd(P,Q)$ then there  is an integer $P^*\ep{P}{p}$ such that $\gcd(P^*,Q)=1$.
\end{lemma}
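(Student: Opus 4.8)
The plan is to reduce the statement to a routine application of the Chinese Remainder Theorem, modifying $P$ by a multiple of $p$. We may assume $Q\ne 0$, the case $Q=0$ being degenerate (and the statement can in fact fail there). The hypothesis $p\nmid\gcd(P,Q)$ says precisely that $p$, $P$ and $Q$ have no common prime factor. I shall look for an integer $t$ such that $P^{*}:=P+tp$ is coprime to $Q$; since then $P^{*}\equiv P\pmod p$ automatically, this will finish the proof.

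Let $q$ range over the finitely many primes dividing $Q$. For each such $q$ I want to pin down a residue class for $t$ modulo $q$ that forces $q\nmid P+tp$, and then combine these choices via the Chinese Remainder Theorem, which applies because these $q$ are pairwise distinct primes. Two cases arise. If $q\ne p$, then $p$ is invertible modulo $q$, so $t\mapsto P+tp$ is a bijection of $\integers/q\integers$; hence exactly one residue class of $t$ makes $P+tp\equiv 0\pmod q$, and I simply take $t\pmod q$ outside that class, which is possible since $q\ge 2$. If $q=p$, then $p\mid Q$, so the hypothesis forces $p\nmid P$, whence $P+tp\equiv P\not\equiv 0\pmod p$ for every $t$, and no constraint on $t$ is needed.

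Finally, applying the Chinese Remainder Theorem I pick $t$ modulo $\prod_{q\mid Q}q$ meeting all the constraints coming from the first case. Then $q\nmid P+tp$ for every prime $q\mid Q$, so $\gcd(P^{*},Q)=1$, while $P^{*}\equiv P\pmod p$, as required. There is no real obstacle here; the only points that need a moment's care are that the moduli over which one applies the Chinese Remainder Theorem are genuinely distinct primes, and that the sole place the hypothesis $p\nmid\gcd(P,Q)$ is used is the case $q=p$, where it guarantees $p\nmid P$.
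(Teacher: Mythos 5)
Your proof is correct, but it follows a genuinely different route from the paper's. The paper argues by making $P^*$ essentially prime: if $p\nmid P$, it chooses $k$ so that $P^*=P+kp$ is a prime larger than $|Q|$ (which implicitly appeals to Dirichlet's theorem on primes in the arithmetic progression $P\bmod p$), while if $p\mid P$ (so $p\nmid Q$) it takes $P^*=p\,r$ with $r$ a prime larger than $|Q|$, which needs only the infinitude of primes. You instead avoid each prime divisor $q$ of $Q$ locally: for $q\ne p$ exactly one residue class of $t$ modulo $q$ is forbidden, for $q=p$ the hypothesis $p\nmid\gcd(P,Q)$ forces $p\nmid P$ so no condition is needed, and the Chinese Remainder Theorem combines the finitely many congruence conditions. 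Your argument is the more elementary and self-contained one (nothing beyond CRT is used), at the cost of being slightly longer to state; the paper's is a one-line construction but rests on a heavier input in its first case. Your caveat about $Q=0$ is also legitimate: with the convention $\gcd(P^*,0)=|P^*|$ the statement can fail (e.g.\ $P=3$, $Q=0$, $p=7$), an edge case the paper's proof glosses over and which does not arise in the paper's application of the lemma; likewise the paper's ``prime greater than $Q$'' should really be ``greater than $|Q|$'', a slip your construction avoids entirely.
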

 \begin{proof} If $p\nmid P$ then choose $k$ so that $P^*=P+kp$ is a prime greater than $Q$, while if $p\mid P$  choose $k$ so that $P^*=p(P/p+k)$ is a $p$ times a prime greater than $Q$.  
\end{proof}

\begin{lemma}\label{L-12684}
We have
\begin{enumerate} 
\item[(i)]  If $P$ is odd and $2^\ell\mid u_{12}$ then $2^{\ell-1}\mid u_6$;
\item[(ii)] If $3\mid u_{8k}$ then $3\mid u_{4k}$.
\end{enumerate}
\end{lemma}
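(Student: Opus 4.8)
The plan is to argue both parts via the product formula \eqref{E-prod}, which here reads $u_{2m}=u_2^{(m)}u_m=v_mu_m$ (since $u_2^{(m)}=(\al^{2m}-\be^{2m})/(\al^m-\be^m)=\al^m+\be^m=v_m$), together with explicit control of the $2$-adic and $3$-adic valuations using the identities $u_{2m}=u_mv_m$ and $v_{2m}=v_m^2-2Q^m$. The key point for (i) is that when $P$ is odd, both $\al$ and $\be$ are algebraic (possibly irrational) and $v_m=\al^m+\be^m$ satisfies the recurrence $v_{m+2}=Pv_{m+1}-Qv_m$; I would track $v_6 \bmod 2$ and $v_{12}\bmod 2$ and show that $v_{12}$ is odd, so that all the $2$-power in $u_{12}=u_6v_6$ beyond a single factor contributed by $v_6$ must already sit in $u_6$. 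More precisely, from $u_{12}=u_6 v_6$ we get $v_2(u_{12})=v_2(u_6)+v_2(v_6)$, so it suffices to prove $v_2(v_6)\le 1$ when $P$ is odd, i.e. that $4\nmid v_6$. This is a finite check: reduce the recurrence for $v_n$ modulo $4$, using that $P$ is odd (so $P\equiv 1$ or $3 \pmod 4$) and splitting on $Q \bmod 4$; the sequence $v_n \bmod 4$ is eventually periodic and one verifies $v_6\in\{1,2,3\}\bmod 4$ in every case, never $0$. That gives $v_2(v_6)\le 1$ and hence $v_2(u_6)\ge v_2(u_{12})-1\ge \ell-1$, which is (i).

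For (ii), the analogous route is $u_{8k}=u_{4k}v_{4k}$, so $v_3(u_{8k})=v_3(u_{4k})+v_3(v_{4k})$, and it suffices to show that if $3\mid u_{8k}$ then $3\nmid v_{4k}$ forces $3\mid u_{4k}$ — but one must rule out the case $3\mid v_{4k}$ while $3\nmid u_{4k}$. Here I would use that $u_n$ and $v_n$ cannot both be divisible by $3$ unless $3\mid \gcd$ of the relevant data: indeed $v_n^2-\De u_n^2 = 4Q^n$, so if $3\mid u_n$ and $3\mid v_n$ then $3\mid 4Q^n$, i.e. $3\mid Q$. If $3\mid Q$, handle it directly: then $u_n\equiv$ (something determined by $P$) modulo $3$ and one checks $3\mid u_{8k}\iff 3\mid u_{4k}$ by the same modular-periodicity argument on the recurrence $u_{n+2}=Pu_{n+1}-Qu_n\equiv Pu_{n+1}\pmod 3$. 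If $3\nmid Q$, then $3\mid v_{4k}$ and $3\mid u_{8k}=u_{4k}v_{4k}$ is automatic regardless of $u_{4k}$, so I instead argue from the rank of appearance: let $\omega=\omega_3$ be the least index with $3\mid u_\omega$ (finite unless $3$ is irregular, in which case $3\mid u_n$ never and there is nothing to prove). By the standard divisibility $3\mid u_n\iff \omega\mid n$, the hypothesis $3\mid u_{8k}$ gives $\omega\mid 8k$; since $\omega\mid (3^2-1)=8$ or $\omega=3\cdot(\text{divisor of }8)$ when $3\mid\De$, in all cases $\omega$ is a divisor of $24$ with the property that $\omega\mid 8k \Rightarrow \omega\mid 4k$ (the only obstruction would be $\omega$ containing a factor $8$ with $k$ odd, but $\omega\mid 8$ means $\omega\in\{1,2,4,8\}$ and one checks $\omega=8$ is impossible for the prime $3$ since $3\nmid u_4$ would then be needed while $3\mid u_8$, contradicting $u_8=u_4v_4$ and the above). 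Then $\omega\mid 4k$ gives $3\mid u_{4k}$, which is (ii).

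The main obstacle I anticipate is the bookkeeping in (ii) around the rank of appearance of $3$: one must be careful that $\omega_3$ genuinely divides $24$ and, crucially, that the "power of $2$" part of $\omega_3$ is at most $4$, so that divisibility of $8k$ by $\omega_3$ forces divisibility of $4k$ by $\omega_3$. This is where the special arithmetic of the prime $3$ (namely $3^2-1=8$, and that $8$ itself is the full $2$-part) does the work, and it is the reason the statement is about $8k$ and $4k$ rather than, say, $4k$ and $2k$. I would present (ii) by first disposing of the irregular case and the $3\mid Q$ case by direct modular computation, then handling $3\nmid Q$ via the rank-of-appearance argument with the explicit bound $\omega_3\mid 8$, citing Lemma \ref{L-eps} (with $p=3$, $\varepsilon=\left(\frac{\De}{3}\right)$, so $3\mid u_{3-\varepsilon}$ and hence $\omega_3\mid 3-\varepsilon\in\{2,4\}$) — which in fact shows $\omega_3\mid 4$ outright, making the implication immediate. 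Part (i) is purely a modulo-$4$ finite verification and should be routine.
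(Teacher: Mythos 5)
Your part (i) is essentially the paper's own argument: the paper also reduces to showing $4\nmid v_6$ when $P$ is odd (it computes $v_6=P^{(2)}(P^{(4)}-Q^2)\equiv 1$ or $2\pmod 4$ according as $Q$ is even or odd) and then uses $u_{12}=u_6v_6$; your finite mod-$4$ check of the recurrence establishes the same fact and is fine. Part (ii), however, goes by a genuinely different route. The paper stays self-contained: writing $u_{8k}=u_{2k}^{(4)}u_4$ and $u_{4k}=u_k^{(4)}u_4$, it reduces everything mod $3$ to the four possible pairs $(P^{(4)},Q^{(4)})\bmod 3$ and reads off directly when $3\mid u_n^{(4)}$, so no rank-of-apparition machinery is needed. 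You instead dispose of the $3\mid Q$ cases by hand and, for $3\nmid Q$, invoke the classical law $3\mid u_n\iff\omega_3\mid n$ together with Lemma \ref{L-eps} (and, implicitly, Lemma \ref{L-pD}) to pin down $\omega_3\in\{2,3,4\}$, so that its $2$-part is at most $4$ and $\omega_3\mid 8k$ forces $\omega_3\mid 4k$. That works and is arguably more conceptual (it explains why the statement is about $8k$ versus $4k$), but it imports the rank-of-apparition divisibility law, which the paper nowhere proves (it only alludes to it when sketching Somer's theorem), whereas the paper's proof uses nothing beyond the product formula. Two small caveats on your write-up: your streamlined conclusion ``$\omega_3\mid 4$ outright'' is false in the subcase $3\mid\Delta$, $3\nmid Q$ (e.g.\ $P=Q=1$), where Lemma \ref{L-eps} does not apply and $\omega_3=3$ by Lemma \ref{L-pD}; your earlier paragraph does cover this case, but the final version should keep it explicit. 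Also, your parenthetical exclusion of $\omega_3=8$ via ``$u_8=u_4v_4$ and the above'' is not a valid deduction as stated -- the correct reason $\omega_3\ne 8$ is precisely the bound $\omega_3\mid 3-\varepsilon\in\{2,4\}$ (or $\omega_3=3$) coming from Lemmas \ref{L-eps} and \ref{L-pD}. With those points tidied, your argument is correct.
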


\begin{proof} Using the notation
$$
 P^{(k)}=P(\al^k,\be^k)=\al^k+\be^k=v_k,\qquad
Q^{(k)}=Q(\al^k,\be^k)=Q^k,
$$
we have $P^{(2)}=P^2-2Q$ and 
\begin{equation}\label{E-P4}
P^{(4)}=(P^2-2Q)^2-2Q^2=P^4-4P^2Q+2Q^2.
\end{equation}
\begin{enumerate} 
\item[(i)] Take $P$ odd. Then  $$P^{(2)}\equiv\begin{cases} \quad 1\pmod{4} \text{ if $Q$ even}\\
-1\pmod{4} \text{ if $Q$ odd}\end{cases}, $$
and so $P^{(4)}\ep{P^{(2)}}{4}$ and
 $$
v_6=P^{(2)}(P^{(4)}-Q^2) \equiv\begin{cases} 1\pmod{4} \text{ if $Q$ even}\\
2\pmod{4} \text{ if $Q$ odd}\end{cases}.
$$
Since $u_{12}=u_6v_6$ by (\ref{E-prod}), we get the result.
\item[(ii)] Since $u_{4k}=u_k^{(4)}u_4$, it is enough to prove that if $3\mid u_{2k}^{(4)}$ and $3\nmid u_4$ then $3\mid u_{k}^{(4)}$. Now, working modulo $3$, $P^{(4)}\equiv P^2(1-Q)-Q^2$, 
using (\ref{E-P4}) and $P^4\equiv P^2$. Thus
$$
\ve{P^{(4)}}{Q^{(4)}}=\begin{cases} \ve{0}{0} \text{ if } P\equiv Q\equiv 0\\
\ve{1}{0} \text{ if } P\equiv\pm 1, Q\equiv 0\\
\ve{1}{1} \text{ if } P\equiv\pm 1, Q\equiv -1\\
\ve{-1}{1} \text{ otherwise.}\end{cases}
$$
The result holds in the first case because $u_4\equiv 0$, and in the second case  because  $u_n^{(4)}\equiv 1$ for all $n\ge 1$. In the other two cases, $u_n^{(4)}\equiv 0$ precisely when $3\mid n$, so the result holds also in these cases.
\end{enumerate}
\end{proof}

\begin{prop} \label{P-23} If $P$ is odd and $2^\ell\cdot 3\in S$, where $\ell\ge 3$, then $2^{\ell-1}\cdot 3\in S$. In particular, then $12\in S$.
\end{prop}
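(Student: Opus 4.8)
The plan is to exploit the factorization coming from~(\ref{E-prod}): writing $n=2^{\ell}\cdot 3$ and $m=n/2=2^{\ell-1}\cdot 3$, we have $u_n=u_{2m}=u_2^{(m)}u_m=v_m\,u_m$, and we control the $2$-part and the $3$-part of $u_m$ separately. A preliminary observation is that $Q$ must be odd: if $Q$ were even, then (with $P$ odd) the recurrence reduces mod $2$ to $u_{k+2}\equiv u_{k+1}$, so $u_k$ is odd for every $k\ge 1$, contradicting $2^{\ell}\cdot 3\in S$. Consequently both $u_k\bmod 2$ and $v_k\bmod 2$ satisfy the Fibonacci recurrence, so $2\mid u_k\iff 3\mid k$ and $2\mid v_k\iff 3\mid k$.

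For the $2$-part: since $\ell\ge 3$ we may write $m=2k$ with $k=2^{\ell-2}\cdot 3$, so $3\mid k$ and hence $v_k$ is even. From the identity $v_m=v_{2k}=v_k^2-2Q^k$ (that is, $P^{(2k)}=(P^{(k)})^2-2Q^{(k)}$), together with $v_k$ even and $Q$ odd, one gets $v_m\equiv -2\equiv 2\pmod 4$, i.e. $v_2(v_m)=1$. Since $2^{\ell}\mid u_n=v_m u_m$, it follows that $2^{\ell-1}\mid u_m$. (Alternatively one can apply Lemma~\ref{L-12684}(i) to the Lucas sequence with parameters $P^{(2^{\ell-2})},Q^{(2^{\ell-2})}$, after noting that $u_{2^{\ell-2}}$ is odd; this is the intended use of Lemma~\ref{L-12684}(i).)

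The $3$-part is the one genuinely delicate point, and it is precisely what Lemma~\ref{L-12684}(ii) is built for: one cannot argue through $u_n=v_mu_m$ directly, because $v_m$ could itself be divisible by $3$. Instead, write $n=8\cdot(2^{\ell-3}\cdot 3)$, legitimate since $\ell\ge 3$; then $3\mid u_n$ and Lemma~\ref{L-12684}(ii) give $3\mid u_{2^{\ell-1}\cdot 3}=u_m$. Combining with the previous step, $2^{\ell-1}\mid u_m$ and $3\mid u_m$ with $\gcd(2^{\ell-1},3)=1$ yield $2^{\ell-1}\cdot 3\mid u_m$, i.e. $m=2^{\ell-1}\cdot 3\in S$.

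Finally, for the assertion that $12\in S$, one iterates the statement just proved, decreasing the exponent of $2$ by one at each step, which is permissible as long as that exponent is at least $3$; the last application, with exponent $3$, produces $2^{2}\cdot 3=12\in S$. I expect the main obstacle to be the $3$-part, where the factorization argument fails and Lemma~\ref{L-12684}(ii) must be invoked; the $2$-part is essentially a $\bmod\ 4$ computation via~(\ref{E-prod}).
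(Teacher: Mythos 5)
Your proof is correct. Your treatment of the $3$-part coincides with the paper's: both write $u_{2^\ell\cdot 3}=u_{8k}$ with $k=2^{\ell-3}\cdot 3$ and invoke Lemma \ref{L-12684}(ii), and both then descend one power of $2$ at a time until reaching $12$. Where you differ is the $2$-part. The paper first notes that all $v_{2^j}$ are odd when $P$ is odd (for either parity of $Q$), telescopes $u_{2^{\ell-2}}=v_{2^{\ell-3}}\cdots v_2v_1$ to conclude $2^\ell\mid u_{12}^{(2^{\ell-2})}$, and then applies Lemma \ref{L-12684}(i) to the shifted sequence with parameters $v_{2^{\ell-2}},Q^{2^{\ell-2}}$ — exactly the route you mention parenthetically. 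You instead first rule out $Q$ even (if $Q$ were even, $u_n$ would be odd for all $n\ge 1$, contradicting $2^\ell\cdot 3\in S$), observe that with $P,Q$ both odd the parity of $v_k$ follows the Fibonacci pattern so $v_k$ is even for $k=2^{\ell-2}\cdot 3$, and then compute directly from $v_{2k}=v_k^2-2Q^k$ that $v_m\equiv 2\pmod 4$, so the factorization $u_n=u_mv_m$ loses exactly one factor of $2$. This is a genuine simplification of the $2$-part: it makes Lemma \ref{L-12684}(i) (and its mod $4$ case analysis) unnecessary for this proposition, at the modest cost of the extra preliminary step establishing that $Q$ is odd, which the paper's argument avoids because the oddness of the $v_{2^j}$ holds regardless of $Q$. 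The paper's arrangement has the advantage of concentrating all the arithmetic in Lemma \ref{L-12684} and keeping the proof of the proposition a formal application of \eqref{E-prod}; yours shows that, once $Q$ is known to be odd, a one-line valuation computation $\operatorname{ord}_2(v_m)=1$ suffices. Your closing iteration down to $12$ is also exactly as in the paper.
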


\begin{proof} Take $P$ odd. Then $P^{(2)}=P^2-2Q$ is also odd, and hence so are all $P^{(2^\ell)}=v_{2^\ell}$ for $\ell\ge 0$. Then for $\ell\ge 3$, using (\ref{E-prod}) and $u_{2k}=u_kv_k$ we have
$$
u_{2^\ell\cdot 3}=u_{12}^{(2^{\ell-2})}u_{2^{\ell-2}}=u_{12}^{(2^{\ell-2})}v_{2^{\ell-3}}v_{2^{\ell-4}}\dots v_2v_1.
$$
So if $2^\ell\mid u_{2^\ell\cdot 3}$ then $2^\ell\mid u_{12}^{(2^{\ell-2})}$ so, by Lemma \ref{L-12684}(i), $2^{\ell-1}\mid u_6^{(2^{\ell-2})}$. Hence
$$
2^{\ell-1}\mid u_6^{(2^{\ell-2})}u_{2^{\ell-2}}=u_{2^{\ell-1}\cdot 3}.
$$
 Also, if $3\mid u_{2^\ell\cdot 3}$ where $\ell\ge 3$  then $3\mid u_{2^{\ell-1}\cdot 3}$, by Lemma \ref{L-12684}(ii). Thus we have proved that if $\ell\ge 3$ and $2^\ell\cdot 3\in S$ then $2^{\ell-1}\cdot 3\in S$. Then $12\in S$ follows.
\end{proof}

\begin{prop}\label{P-AJ}
For any positive integer $n$ and distinct integers $a,b$,

$$
n\mid a^n-b^n\Longrightarrow n\mid \frac{a^n-b^n}{a-b}.
$$
\end{prop}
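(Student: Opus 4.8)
The plan is to verify the divisibility $n\mid (a^n-b^n)/(a-b)$ one prime at a time. Fix a prime $p$, write $\nu_p$ for the $p$-adic valuation; it suffices to show $\nu_p\!\left((a^n-b^n)/(a-b)\right)\ge\nu_p(n)$, and we may assume $k:=\nu_p(n)\ge 1$. The crucial point is to separate the trivial case $p\nmid a-b$ from the substantive one. If $p\nmid a-b$, then division by $a-b$ does not change the $p$-adic valuation, so $\nu_p\!\left((a^n-b^n)/(a-b)\right)=\nu_p(a^n-b^n)\ge\nu_p(n)=k$ straight from the hypothesis $n\mid a^n-b^n$.

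For the remaining case $p\mid a-b$ the idea is to make the quotient explicit. Writing $d=a-b$ and substituting $b=a-d$ into the binomial theorem gives the polynomial identity
$$
\frac{a^n-b^n}{a-b}\;=\;\sum_{i=0}^{n-1}(-1)^i\binom{n}{i+1}a^{\,n-1-i}d^{\,i},
$$
whose $i=0$ term is $na^{n-1}$. I would then bound $\nu_p$ of each summand separately, in two sub-cases. If $p\mid a$ (hence also $p\mid b$), the $i$-th summand has valuation at least $i\,\nu_p(d)+(n-1-i)\,\nu_p(a)\ge i+(n-1-i)=n-1$, and one checks $n-1\ge k$ because $n\ge p^{k}\ge 2^{k}\ge k+1$. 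If $p\nmid a$, then using $\binom{n}{i+1}=\tfrac{n}{i+1}\binom{n-1}{i}$ one gets $\nu_p\binom{n}{i+1}\ge k-\nu_p(i+1)$, so the $i$-th summand has valuation at least $k-\nu_p(i+1)+i\,\nu_p(d)\ge k-\nu_p(i+1)+i$; this is $\ge k$ by the elementary inequality $\nu_p(i+1)\le\log_p(i+1)\le i$ (equivalently $p^{i}\ge i+1$), valid for all $i\ge 0$ and $p\ge 2$. In every case each summand is divisible by $p^{k}$, hence so is the quotient, and since $p$ was arbitrary the proof is complete.

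The one point that needs care --- and the reason for the initial split --- is that when $p\nmid a-b$ one must \emph{not} try to estimate the individual summands of the above expansion: the binomial coefficients $\binom{n}{i+1}$ alone are generally not divisible by $p^{k}$ (take $n=p^{2}$, $i+1=p$), and the estimate only succeeds once the extra factors of $p$ carried by the powers of $d=a-b$ are available. Apart from that, the argument is just routine valuation bookkeeping; one could instead phrase the case $p\mid a-b$ as an application of the Lifting-the-Exponent lemma (treating $p=2$ on the side), but the explicit expansion above is self-contained. The degenerate possibilities $a=0$ or $b=0$ are covered by the same argument with the convention $\nu_p(0)=\infty$.
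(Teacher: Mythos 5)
Your proof is correct and is essentially the paper's own argument: a prime-by-prime valuation count using the binomial expansion in powers of $d=a-b$ together with $\binom{n}{i+1}=\frac{n}{i+1}\binom{n-1}{i}$ and the bound $\nu_p(i+1)\le\log_p(i+1)\le i$, the paper merely phrasing it as $p^{r+\ell}\mid a^n-b^n$ when $p^\ell\,\|\,a-b$ rather than working with the quotient directly. (Your further split into $p\mid a$ versus $p\nmid a$ is harmless but unnecessary, since the second estimate already works with $\nu_p(a)\ge 0$.)
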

\begin{proof} For any prime $p$, suppose that $p^\ell\|a-b$ and $p^r\|n$. It is clearly enough to prove that $p^{r+\ell}\mid a^n-b^n$ whenever $\ell>0$. Put $a=b+\la p^\ell$. Then
\begin{align*}
a^n-b^n&=\sum_{k=1}^n\binom{n}{k}\la^kp^{\ell k}b^{n-k}\\
&=\sum_{k=1}^n\frac{n}{k}\binom{n-1}{k-1}\la^kp^{\ell k}b^{n-k}\\
&\equiv 0\pmod{p^L},
\end{align*}
where
\begin{align*}
L&\ge r+\min_{k=1}^n (\ell k-\lfloor\log_p k\rfloor)\\
&\ge r+\ell +\min_{k=1}^n (\ell(k-1)-\log_2 k)\\
&\ge r+\ell+\min_{k=1}^n ((k-1)-\log_2 k)\\
&=r+\ell.
\end{align*}
\end{proof}

\section{Proof of Theorem \ref{S-Basic}.}

To prove part (a), take
$n\in S$ and $p$ prime.
First note that, from (\ref{E-prod}), $u_{np}=u_p^{(n)}u_n$. 
Now suppose that $np\mid u_{np}$. Then either $p\mid u_n$, or, by
Lemma \ref{L-pD},
 we have $p\mid \De^{(n)}$, where $\De^{(n)}=(\al^n-\be^n)^2= u_n^2\De $. Hence $p\mid u_n\De $.

Conversely, suppose $p\mid  u_n\De$. Then $p\mid \De^{(n)}$, so
that, by Lemma \ref{L-pD}, $p\mid u_p^{(n)}$, giving $pn\mid
u_p^{(n)}u_n=u_{np}$.

To prove (b), take $n\in S$, $n\ne 1,6$ or $12$. If $3\in S$ then $3/3=1\in S$. Otherwise, by Theorem \ref{S-down}
and Proposition \ref{P-23}, we have $n/p\in S$ for some prime factor $p$ of $n$.  Thus we obtain a sequence
$n, n/p, (n/p)/p',\dots$ of elements of $S$, which stops only at $1$, $6$ or $12$. But clearly $6$ and $12$ cannot both
 be basic, so the process will stop at either $1$ (always basic!) or at most one of $6$ and $12$. This shows that this sequence, written backwards, must be of the form $b, bp_1, bp_1p_2,\dots, bp_1\dots p_r$, say, as required. By (a), we know that 
 $p_i$ is in $\mathcal P_{S,bp_1\dots p_{i-1}}$. 

To prove (c), we just need to find for which $P,Q$ the numbers $6$ or $12$ are basic.

 {\bf The case \mbox{\boldmath$6\in S, 3\not\in S, 2\not\in S$}.} Since
$u_2=P$, we know that $2\in S$ iff $P$ is even. Hence $P$ is odd.
Also

\begin{align}\label{E-u633}
u_6=u_3v_3&=(P^2-Q)(P^2-3Q)P.
\end{align}
As $6\mid u_6$ and $3\nmid u_3=P^2-Q$, we have $3\mid P$, and so $Q\ep{\pm 1}{3}$. Also $Q$ must be odd, so $P\ep{3}{6}$ and $Q\ep{\pm 1}{6}$.

{\bf The case \mbox{\boldmath$12\in S, 6\not\in S, 4\not\in S$}.}  Since $2\not\in S$ by Corollary \ref{C-mn}, we have $P$ odd, as above. Now $u_{12}=u_6v_6$ and
\begin{align}\label{E-v633}
v_6=v_3^{(2)}&=(P^2-2Q)((P^2-2Q)^2-3Q^2).
\end{align}
If $Q$ were even,  then by (\ref{E-u633}) and (\ref{E-v633}) $u_6$, $v_6$, and $u_{12}$ would all be odd. So $Q$ is odd.
As $u_6$ is then even, $3\nmid u_6$, and we have $P\ep{\pm 1}{3}$ and $Q\ep{0\text{ or }  -1}{3}$. As $3\mid u_{12}$, also $3\mid v_6\ep{(P^2-2Q)^3}{3}$, giving $Q\ep{-1}{3}$. Hence $P\ep{\pm 1}{6}$ and $Q\ep{-1}{6}$.

The converse for both of these cases is easily checked.

\section{The set $T$}\label{S-T}

 The results for the set $T=\{n\in\mathbb N: n\mid v_n\}$ differ slightly from those for $S$. Essentially, this is because of difficulties at the prime $2$: $v_n$ divides $v_{np}$ for $p$ odd, but not in general for $p=2$. The main result is the following.
For $n\in T$, define $\mathcal P_{T,n}$ to be the set of primes $p$
such that $np\in T$. A prime is said to be {\it special} if it divides
both $P$ and $Q$. It is clear from applying the recurrence relation that
all $v_n$ for $n\ge 1$ are divisible by  $\gcd(P,Q)$, and so by all
special primes. We say that an element $n$ of $T$  is {\it (second
kind) basic} if there is no prime $p$ such that $n/p$ is in $T$.

\begin{theorem} \label{T-Basic} \begin{enumerate}

\item[(a)] For $n\in T$, the set $\mathcal P_{T,n}$ is the set of odd primes dividing $v_n$, with the possible inclusion of $2$. Specifically, the prime $2$ is in $\mathcal P_{T,n}$ if and only if $n$ is a product of special primes and either
\begin{itemize}
\item $P$ is even;

or
\item $Q$ is odd and $3\mid n$.
\end{itemize}
\item[(b)] Every element of $T$ can be written in the form $bp_1\dots p_r$
for some $r\ge 0$, where $b\in T$ is (second kind) basic  and, for
$i=1,\dots,r$, the numbers $bp_1\dots p_{i-1}$ are also in $T$, and
$p_i$ is in $\mathcal P_{bp_1\dots p_{i-1}}$.
\item[(c)] The (second kind) basic elements of $T$ are
\begin{itemize}
\item $1$ and $6$ if $P\ep{\pm 1}{6}$, $Q\ep{-1}{6}$;
\item $1$ only, otherwise.
\end{itemize}
\end{enumerate}
\end{theorem}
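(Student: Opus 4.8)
The plan is to mirror the proof of Theorem \ref{S-Basic}, with the prime $2$ needing its own treatment at every stage.

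\emph{Part (a).} The workhorse identities are $v_{np}=v_p^{(n)}$ (where $v_p^{(n)}=v_p(\al^n,\be^n)=\al^{np}+\be^{np}$), the elementary fact $v_n\mid v_{np}$ for $p$ odd (because $x+y\mid x^p+y^p$), and the congruence $v_{np}=v_p^{(n)}\equiv(\al^n+\be^n)^p=v_n^p\pmod p$ from the binomial theorem in $\integers[\al]$, which gives $p\mid v_{np}\iff p\mid v_n$. Hence, for $n\in T$ and $p$ an odd prime, $np\in T\iff p\mid v_n$: if $p\nmid n$ the conditions $n\mid v_{np}$ and $p\mid v_{np}$ split off and are handled separately; if $p\mid n$ one writes $v_{np}=v_n\cdot(v_{np}/v_n)$, notes that $v_{np}/v_n\equiv p\,\al^{n(p-1)}\pmod{v_n}$ and hence, since $p\mid v_n$, that $p\mid v_{np}/v_n$, so the $p$-adic valuation rises by one, exactly enough. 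For $p=2$ one instead uses $v_{2n}=v_n^2-2Q^n$: from $n\mid v_n$, the requirement $2n\mid v_{2n}$ forces $q\mid Q$ for every odd prime $q\mid n$, and then comparing $v_2(n)$ with $v_2(v_n)$ and $v_2(2Q^n)$ pins down the remaining parity conditions; unwinding, $2\in\mathcal P_{T,n}$ precisely when $n$ is a product of special primes together with the stated dichotomy, the small values $v_1=P$, $v_2=P^2-2Q$, $v_3=P(P^2-3Q)$ and the value of $v_n\bmod 2$ making this explicit. I would isolate the odd-prime statement and the $p=2$ statement as two lemmas, the first being the $v$-analogue of Lemma \ref{L-pD}.

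\emph{Part (b).} This needs a ``divide-down'' companion for $T$ (the analogue of Theorem \ref{S-down}): for $n\in T$, $n>1$, deleting the largest prime factor keeps one in $T$ apart from a few small exceptions, which descend to $6$. The rank-of-appearance heuristic following Theorem \ref{S-down} transfers: for $p\nmid 2Q\De$ the set of $k$ with $p\mid v_k$ is empty or consists of the odd multiples of half the order of $\al/\be$ modulo $p$, all of whose prime factors are $<p$ (with $2$ and $3$ the troublesome small primes), so that the argument around (\ref{E-omega}) survives division by $p_{\max}$ except in explicit configurations, handled via the computations of Lemma \ref{L-12684} and the $v$-analogues of Propositions \ref{P-23} and \ref{P-AJ}. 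Granting this, part (b) is the verbatim telescoping of Theorem \ref{S-Basic}(b): repeatedly deleting a prime produces a chain in $T$ terminating at a basic element, read backwards, with (a) naming the admissible primes at each step.

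\emph{Part (c).} It remains to decide when $6$ is second-kind basic, i.e.\ $6\in T$ while $2\notin T$ and $3\notin T$. Now $2\notin T\iff P$ odd (since $v_2=P^2-2Q$), and $3\notin T\iff 3\nmid P$ (since $v_3=P(P^2-3Q)$); using $v_6=v_3^{(2)}=(P^2-2Q)((P^2-2Q)^2-3Q^2)$ from (\ref{E-v633}), reducing modulo $2$ and modulo $3$ gives $2\mid v_6\iff Q$ odd and $3\mid v_6\iff Q\equiv-1\pmod 3$. Combining these congruences yields $P\equiv\pm1\pmod 6$, $Q\equiv-1\pmod 6$, and the converse is the routine check. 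Finally one verifies that no second number is simultaneously basic: the only candidate is $12$, which is never basic, since $12\in T$ forces $4\mid v_{12}=v_6^2-2Q^6$, and chasing parities and divisibility by $3$ through $v_{12}=v_6^{(2)}=v_4^{(3)}$ forces $6\mid P$ and $6\mid Q$, whence $6\mid P^2-2Q\mid v_6$ and so $6\in T$.

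\emph{Main obstacle.} The crux, and the source of every difference from the first-kind case, is the prime $2$: since $v_n\nmid v_{2n}$ in general, both part (a) and the divide-down step of part (b) must treat the $2$-adic behaviour directly through $v_{2n}=v_n^2-2Q^n$, and it is exactly this analysis that produces the ``product of special primes / $P$ even / $Q$ odd with $3\mid n$'' description in (a) and the loss of the ``$12$'' basic element present for $S$.
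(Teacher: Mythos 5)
Your overall architecture is the paper's: odd primes in part (a) via the second‑kind analogue of Lemma \ref{L-pD} (this is exactly the paper's Lemma \ref{L-pP}, and your valuation argument ``rises by one, exactly enough'' is the paper's step $p^\lambda\|v_n\Rightarrow p^{\lambda+1}\mid v_{np}$), the prime $2$ via $v_{2n}=v_n^2-2Q^n$, and part (c) by the same congruence analysis of $v_2,v_3,v_6$ as the paper. Your direct argument that $12$ is never second‑kind basic ($12\in T$ forces $3\mid v_{12}$ and $4\mid v_{12}$, hence $2$ and $3$ special, hence $6\in T$) is correct and is a genuinely different route from the paper, which instead uses Proposition \ref{P-32}. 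However, there are real gaps. In part (a), for $p=2$ your ``unwinding'' covers at best the necessity half, and incompletely: you derive $q\mid Q$ for the odd prime factors $q$ of $n$ but never show $q\mid P$ (via $v_n\ep{P^n}{q}$ when $q\mid Q$, $q\nmid P$), which is what makes those factors \emph{special}; and the sufficiency half — that the stated conditions really put $2$ in $\mathcal P_{T,n}$ — is not argued at all. That direction is not routine: when $2$ is special and $n$ is even one needs the quantitative fact that high powers of special primes divide $v_m$ (the paper's Lemma \ref{L-2q}/Corollary \ref{C-spec}), and otherwise one needs $n\mid Q^n$, which comes from the bound $\lambda\le\log_q n<n$ for $q^\lambda\|n$; none of this appears in your sketch.

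The second gap is in part (b): everything hinges on a divide‑down theorem for $T$, and ``granting this'' is where the work sits. The paper imports Somer's Theorem \ref{T-down} and still has to dispose of its exceptional cases $n=2^\ell\cdot 3$ with $P$ odd; it does so with Proposition \ref{P-32} ($2^\ell\cdot3\in T\Rightarrow 2^\ell\in T$ for $\ell\ge2$), a statement of a different shape from the first‑kind Proposition \ref{P-23}, whereas your appeal to ``$v$‑analogues of Propositions \ref{P-23} and \ref{P-AJ}'' points at the wrong tools (Proposition \ref{P-AJ} plays no role for $T$). Concretely, your explicit treatment handles only the candidate $12$; the exceptional values $24,48,\dots$ (i.e.\ $\ell\ge3$) are left unaddressed, so your assertion that ``the only candidate is $12$'' is unsupported. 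The hole can be closed either by proving the paper's Proposition \ref{P-32}, or by noting that for $P$ odd no multiple of $4$ lies in $T$ at all: if $Q$ is even then every $v_k$, $k\ge1$, is odd, while if $Q$ is odd then $v_{2m}=v_m^2-2Q^m\equiv 2$ or $3\pmod 4$, so $4\nmid v_{2m}$ — but some such argument must be supplied.
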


As in Theorem \ref{S-Basic}, the primes in part (b) of Theorem
\ref{T-Basic} need not be distinct. Note that part (a) of the theorem implies that, unless $2$ is special, no element of $T$ is divisible by $4$. Again, Somer  \cite[Theorem 4]{MR1393479}
had many results concerning the set $T$. In particular, he already
noted  the importance of $6$  for its
structure.

We now compare $\mathcal P_{T,n}$ and $\mathcal P_{T,np}$, as we did
 $\mathcal P_{S,n}$ and $\mathcal P_{S,np}$. But, in this case,
the prime $2$ is, unsurprisingly, anomalous.

\begin{cor}\label{C-Q}
\begin{enumerate} \item[(a)] For an odd prime $p$ in $\mathcal P_{T,n}$, we
have $p\in\mathcal P_{T,np}$;
\item[(b)] For $q$ an odd prime with $q\in\mathcal P_{T,n}$, we have $q\in\mathcal P_{T,2n}$ if and only if $q\mid Q$;
\item[(c)] For $2\in\mathcal P_{T,n}$, we have $2\in\mathcal P_{T,2n}$ if and only if $2$ is special.
\end{enumerate}
\end{cor}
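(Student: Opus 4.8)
The plan is to read off all three parts directly from the description of $\mathcal{P}_{T,m}$ in Theorem~\ref{T-Basic}(a), using only two elementary facts about the second-kind sequence: that $v_n\mid v_{kn}$ whenever $k$ is odd — because $(x^k+y^k)/(x+y)$ is a symmetric polynomial in $x+y$ and $xy$ with integer coefficients, so $v_{kn}/v_n\in\integers$ — and the duplication formula $v_{2n}=v_n^2-2Q^n$.

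For part~(a): if $p$ is an odd prime in $\mathcal{P}_{T,n}$, then Theorem~\ref{T-Basic}(a) applied to $n$ gives $p\mid v_n$, and also $np\in T$. Since $p$ is odd, $v_n\mid v_{np}$, so $p\mid v_{np}$; applying Theorem~\ref{T-Basic}(a) to $np\in T$ then shows $p\in\mathcal{P}_{T,np}$. For part~(b): for an odd prime $q\in\mathcal{P}_{T,n}$ the same theorem gives $q\mid v_n$, and, since $\mathcal{P}_{T,2n}$ is defined only when $2n\in T$, Theorem~\ref{T-Basic}(a) applied to $2n$ shows that $q\in\mathcal{P}_{T,2n}$ is equivalent to $q\mid v_{2n}$; the duplication formula together with $q\mid v_n$ reduces this to $q\mid 2Q^n$, i.e., since $q$ is odd, to $q\mid Q$.

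For part~(c) I would use the explicit criterion for when $2$ lies in one of these sets. By Theorem~\ref{T-Basic}(a), $2\in\mathcal{P}_{T,n}$ forces $n$ to be a product of special primes, with moreover $P$ even or else $Q$ odd and $3\mid n$; in particular $2n\in T$, so $\mathcal{P}_{T,2n}$ is defined. If $2$ is special then $P$ is even and $2n$ is again a product of special primes, so $2\in\mathcal{P}_{T,2n}$ by the same criterion; if $2$ is not special then $2n$, being even, cannot be a product of special primes, so $2\notin\mathcal{P}_{T,2n}$. None of this is deep — the whole corollary is bookkeeping on top of Theorem~\ref{T-Basic}(a) — and the only step that needs a moment's care is this last one: among the two conjoined conditions for $2\in\mathcal{P}_{T,m}$, it is ``$m$ is a product of special primes'' that governs the behaviour at $2$, and it survives the passage from $n$ to $2n$ exactly when $2$ itself is special.
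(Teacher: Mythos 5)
Your proof is correct and follows essentially the same route as the paper: part (a) from $v_n\mid v_{np}$ for $p$ odd plus Theorem \ref{T-Basic}(a), part (b) from $v_{2n}=v_n^2-2Q^n$ together with $q\mid v_n$, and part (c) from the ``product of special primes'' clause in the criterion for $2\in\mathcal P_{T,m}$. Your extra remarks (the symmetric-polynomial justification of $v_n\mid v_{kn}$ for $k$ odd, and the observation that $2n\in T$ so $\mathcal P_{T,2n}$ is defined) are sound but add nothing beyond what the paper already uses.
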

\begin{proof} Part (a) follows from the fact that for $p$ odd $v_n\mid v_{np}$ , combined with Theorem \ref{T-Basic}(a). For (b), we know from Theorem \ref{T-Basic}(a) that $q\mid v_n$. Then from $v_{2n}=v_n^2-2Q^n$ we see that $q\mid v_{2n}$ iff $q\mid Q$. For (c), we know from Theorem \ref{T-Basic}(a) that  for $2\in\mathcal P_{T,2n}$ all prime divisors of $2n$ are special, so $2$ is special.
 Conversely, if $2$ is special, then all prime factors of $2n$ are special, and $P$ is even, so that, by Theorem \ref{T-Basic}(a), $2\in\mathcal P_{T,2n}$.
\end{proof}
\begin{cor}\label{C-nm}
If $n\in T$ and
\begin{itemize} \item all odd prime factors of $m$ divide $v_n$;

and
\item if $m$ is even then every prime divisor of $2n$ is special;
\end{itemize}
then $nm\in T$.
\end{cor}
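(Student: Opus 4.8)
The plan is to reach $nm$ from $n$ by multiplying successively by the prime factors of $m$, remaining inside $T$ at every step. This is legitimate because $\mathcal P_{T,N}$ is by definition the set of primes $p$ with $Np\in T$, and Theorem \ref{T-Basic}(a) describes it explicitly: for an odd prime $q$ one has $q\in\mathcal P_{T,N}$ iff $q\mid v_N$, while $2\in\mathcal P_{T,N}$ iff $N$ is a product of special primes and either $P$ is even or ($Q$ is odd and $3\mid N$). So write $m=q_1q_2\cdots q_s$ as a product of (not necessarily distinct) primes, in an order to be chosen, put $N_0=n$ and $N_i=N_{i-1}q_i$, and reduce the corollary to checking $q_i\in\mathcal P_{T,N_{i-1}}$ for $i=1,\dots,s$.

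If $m$ is odd, take the $q_i$ in any order; each is odd, with $q_i\mid v_n$ by hypothesis. Since $N_{i-1}=n\cdot q_1\cdots q_{i-1}$ is an odd multiple of $n$, and $v_a\mid v_{ab}$ whenever $b$ is odd (iterate the fact, noted at the start of Section \ref{S-T}, that $v_a\mid v_{ap}$ for $p$ an odd prime), we get $v_n\mid v_{N_{i-1}}$, hence $q_i\mid v_{N_{i-1}}$ and $q_i\in\mathcal P_{T,N_{i-1}}$. Thus $N_i\in T$ and finally $nm\in T$. This is the exact $v$-analogue of the step-up argument behind Corollary \ref{C-mn}, powered by Corollary \ref{C-Q}(a), and settles the odd case.

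If $m$ is even, write $m=2^am'$ with $m'$ odd and $a\ge1$. By hypothesis every prime divisor of $2n$ is special, so $2\mid P$ and every prime factor of $n$ is special. I would absorb $m'$ first, exactly as in the odd case, reaching $nm'\in T$ with each number $n\cdot(\text{prefix of }m')$ in $T$; then I would absorb the remaining $2^a$ one factor of $2$ at a time. For $0\le j<a$ this requires $2\in\mathcal P_{T,2^jnm'}$, i.e.\ by Theorem \ref{T-Basic}(a) that $2^jnm'$ is a product of special primes and $P$ is even. We already have $P$ even and every prime factor of $n$ special, so the only remaining point is that every prime factor of $m'$ be special; granted that, $2\in\mathcal P_{T,2^jnm'}$ for each $j$, and iterating gives $nm=2^anm'\in T$.

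The crux is therefore to show that the odd prime factors of $m$ are special — equivalently, since an odd prime dividing $v_n$ is never irregular, that they divide $Q$. That this divisibility is genuinely necessary can be seen directly: if an odd prime $q\mid m$ had $q\mid v_n$ but $q\nmid Q$, then, working in $\mathbb Z/q$ or its quadratic extension, $\theta:=\alpha/\beta$ is a unit with $\theta^n=-1$, so $\theta^{nm}=(\theta^n)^{2^am'}=1\ne-1$, i.e.\ $q\nmid v_{nm}$ and hence $nm\notin T$ (cf.\ Corollary \ref{C-Q}(b)). So in the even case one really needs every prime factor of $m$ — not only of $2n$ — to be special; obtaining that from the given hypotheses is the part I expect to require real work, while the remainder is the routine step-up bookkeeping already met in Corollary \ref{C-mn}.
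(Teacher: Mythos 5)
Your treatment of odd $m$ is exactly the paper's (one-line) argument: write $m$ as a product of primes, absorb them one at a time, and use Theorem \ref{T-Basic}(a) together with $v_n\mid v_N$ when $N/n$ is odd (Lemma \ref{L-pP}(ii) iterated); that half is complete and correct. For even $m$ you stop, observing that Theorem \ref{T-Basic}(a) lets you absorb a factor $2$ into $N=2^jnm'$ only if every prime factor of $m'$ is special, and that the stated hypotheses do not provide this. You should not expect ``real work'' to extract it: it cannot be extracted, because the corollary as printed is false. Take $P=6$, $Q=2$, $n=1$, $m=6$. The only odd prime factor of $m$ is $3$, which divides $v_1=6$; $m$ is even and the only prime divisor of $2n$ is $2$, which is special. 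Yet modulo $3$ the recurrence gives $v_{k+2}\equiv v_k$, so $v_k\equiv 2\pmod 3$ for all even $k$ (concretely $v_6=v_3^2-2Q^3=32400-16=32384$), hence $6\notin T$. Your structural explanation of the failure is the right one: an odd prime $q$ with $q\mid v_n$ but $q\nmid Q$ forces $n$ to be an odd multiple of half the rank of appearance of $q$, so $q\nmid v_{nm}$ for every even $m$ (this is Corollary \ref{C-Q}(b) in essence); and such $q$ need not be special.

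So the gap you flagged is a defect of the statement and of the paper's proof, not of your argument. The paper's proof (``the stated conditions ensure that we stay within $T$'') silently assumes that the intermediate numbers $2^jnm'$ are products of special primes, which requires the odd prime factors of $m$, not merely the prime divisors of $2n$, to be special. The hypothesis should read, for instance, ``if $m$ is even then every prime divisor of $2nm$ is special''; with that reading your outline closes immediately (special odd primes divide every $v_k$, $k\ge 1$, by the remark before Theorem \ref{T-Basic}, so they satisfy the first hypothesis automatically). Note also that Somer's original condition quoted after the corollary (``$m$ is a product of special primes or divides $n$'') does make the even case work, since either alternative, combined with ``every prime divisor of $2n$ is special'', forces all prime factors of $m$ to be special; it is the paper's attempted weakening to ``all odd prime factors of $m$ divide $v_n$'' that breaks the even case. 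The only thing missing from your write-up is the recognition that no proof from the literal hypotheses exists, which the counterexample above supplies.
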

\begin{proof}
On successively multiplying $n$ by first the odd and then the even prime divisors of $m$, we see from Theorem \ref{T-Basic}(a) that the stated conditions ensure that we stay within $T$ while doing this.
\end{proof}

This result extends Theorem 5(i)  of Somer \cite{MR1393479}, which has the condition that `$m$ is a product of special primes or divides $n$' instead of `all odd prime factors of $m$ divide $v_n$'.

\section{Preliminary results for $T$.}

We first quote the important result of Somer for $T$, corresponding to his result (Theorem \ref{S-down} above) for $S$.

\begin{theorem}[{{Somer \cite[Theorem 5]{MR1393479}}}]\label{T-down}  Theorem \ref{S-down} holds with the set $S$ replaced by the set $T$.
\end{theorem}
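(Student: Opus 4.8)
The plan is to mimic the structure already sketched for Theorem \ref{S-down}, working with the rank of appearance adapted to the second-kind sequence. For the first-kind sequence one uses $\omega(n)$, the least $k$ with $n\mid u_k$, and the identity $\omega(n)=\lcm_p\omega(p^{k_p})$. For $v_n$ the cleanest route is to reduce everything back to the first-kind sequence via the identity $v_n = u_{2n}/u_n$, valid whenever $\De\ne 0$ (and trivially when $\De=0$, where one checks the claim directly from $v_n=\al^n+\be^n$ with $\al$ an integer). Thus $n\mid v_n$ is equivalent to $n u_n \mid u_{2n}$, i.e.\ to a divisibility condition on ranks of appearance that can be analysed prime-by-prime exactly as in the displayed equation (\ref{E-omega}). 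The key arithmetic input remains that for an odd prime $p$ the rank $\omega(p^{k_p})$ has the shape $p^{k'_p}\ell_p$ with $k'_p<k_p$ and $\ell_p\mid p^2-1$, together with the fact that every prime factor of $p^2-1$ is smaller than $p$ for $p>2$, while $2^2-1=3$.

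The steps, in order. First I would record the translation $n\in T \iff nu_n\mid u_{2n}$ and reformulate it as a condition on $\lcm$ of prime-power ranks, as in (\ref{E-omega}), paying attention to the extra factor of $2$ on the index $2n$ — this is precisely the source of the "$P$ odd, $n=2^\ell\cdot 3$" exception, just as for $S$. Second, take $n\in T$ with $n>1$ and $p_{\max}$ its largest prime factor, and argue that if $p_{\max}>3$, or $p_{\max}=3$ but $n$ is odd or $2\mid n$ with $\ell_2=1$, then stripping off $p_{\max}$ preserves the divisibility relation: removing a factor $p_{\max}$ from the right-hand product strictly decreases the $p_{\max}$-adic valuation, while the left-hand $\lcm$ loses only the contribution $\omega(p_{\max}^{k_{p_{\max}}})$, whose prime factors are all strictly smaller than $p_{\max}$ and hence already accounted for by the remaining terms. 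Third, handle the boundary case $p_{\max}=3$, $2\mid n$, $\ell_2=3$: here, exactly as in the $S$ argument, $n/3\in T$ provided $3\mid n/3$, so the only genuine obstructions are $n$ of the form $2^\ell\cdot 3$ with $P$ odd — matching the stated exception. Fourth, quote Lemma \ref{L-12684} and Proposition \ref{P-23} where their analogues for $v$ are needed; concretely, $v_{2^\ell\cdot 3}=v_{6}^{(2^{\ell-2})}\cdots$ factorizations let one transfer the $2$-adic and $3$-adic bookkeeping to the already-proven first-kind facts.

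The main obstacle I expect is the prime $2$, and specifically getting the $\ell_2$ case-split exactly right under the extra doubling of the index. In the $u$-case one compares $\omega(n)$ with $n$; in the $v$-case one compares $\lcm(\omega(n),\omega(2n))$-type data with $n$, and the interaction between the "free" factor $2$ coming from $u_{2n}$ and the factor $3=2^2-1$ coming from $\omega(2^{k})$ is delicate. I would isolate this by first treating $n$ odd (where $v_n\mid v_{np}$ for the relevant $p$ makes the descent immediate from Theorem \ref{S-Basic}-style reasoning), then $2\|n$, then $4\mid n$, in each case reducing to the first-kind statement via $v_m=u_{2m}/u_m$ and invoking Theorem \ref{S-down} together with Proposition \ref{P-23}. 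Since Somer's original argument already carries this through, the write-up can legitimately be compressed to: "The proof parallels that of Theorem \ref{S-down}, using $v_n=u_{2n}/u_n$ to reduce to statements about the rank of appearance in $(u_n)$, with the exceptional family $2^\ell\cdot 3$ ($P$ odd) arising for the same reason — the factor $3=2^2-1$ — as before."
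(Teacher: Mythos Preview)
The paper does not give its own proof of Theorem \ref{T-down}; it merely quotes the result from Somer \cite[Theorem 5]{MR1393479} and proceeds. So there is no in-paper argument to compare your proposal against.

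On the substance of your plan: the identity $v_n=u_{2n}/u_n$ is correct, but your translation ``$n\in T\iff nu_n\mid u_{2n}$, which can be analysed prime-by-prime exactly as in (\ref{E-omega})'' is too quick. Unpacking $nu_n\mid u_{2n}$ at a prime $p\mid n$ simply returns the original condition $v_p(v_n)\ge k_p$, so nothing has been gained yet. The genuine analogue of (\ref{E-omega}) for the second kind is not a pure divisibility: for an odd prime $p\nmid Q$ one has $p^{k_p}\mid v_n$ iff $\omega(p^{k_p})$ is \emph{even} and $n$ is an \emph{odd} multiple of $\omega(p^{k_p})/2$ (cf.\ Somer \cite[Proposition 2(vii)]{MR1393479} and the discussion before Lemma \ref{L-binom}). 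This congruence-plus-parity condition, not $\omega(p^{k_p})\mid n$, is what must be shown to persist when $n$ is replaced by $n/p_{\max}$. Fortunately it does persist when $p_{\max}$ is odd: since all prime factors of $\omega(p^{k_p})$ are at most $p<p_{\max}$, dividing $n$ by the odd prime $p_{\max}$ leaves both the divisibility and the oddness of $2n/\omega(p^{k_p})$ intact. Your later step invoking Lemma \ref{L-12684} and Proposition \ref{P-23} is misplaced, though: those handle the leftover cases $n=2^\ell\cdot 3$ for $S$ and are supplied \emph{in the present paper}, whereas Theorem \ref{T-down} (like Theorem \ref{S-down}) is stated \emph{with} the exception $n=2^\ell\cdot 3$, so no such patching is needed here --- the analogue for $T$ is Proposition \ref{P-32}, proved separately afterwards.
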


Jarden \cite[Theorem E]{MR0197383} proved this result for the classical Lucas sequence (i.e., $P=1$, $Q=-1$) under the restriction $p_\text{max}\ne 3$.

\begin{lemma} \label{L-2q} 
Suppose $q$ is a special prime. Then $q^{e_n}\mid v_n$, where
$e_n\ge \lfloor\log_q n\rfloor$.
\end{lemma}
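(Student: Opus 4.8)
The plan is to prove the bound $q^{e_n}\mid v_n$ with $e_n\ge\lfloor\log_q n\rfloor$ by induction on $n$, exploiting the doubling and multiplication formulas for the Lucas sequence of the second kind together with the fact that $q\mid P$ and $q\mid Q$. The base case $n=1$ is immediate since $v_1=P$ is divisible by $q=q^{\lfloor\log_q 1\rfloor}=q^0$ trivially, and indeed $q^1\mid v_1$ already gives a little room to spare; more generally I expect the induction to show $q^{e_n}\mid v_n$ with a comfortable surplus so that the floor in the exponent is never tight.

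First I would record the two identities I need: $v_{2n}=v_n^2-2Q^n$ (already used in the proof of Corollary \ref{C-Q}) and, more generally, the multiplication relation $v_{kn}=v_n\cdot(\text{something in }v^{(n)})$ — in fact the cleanest tool is the observation from \eqref{E-prod}-type reasoning that for $q$ special, $q$ divides every $v_m$, $m\ge 1$ (as noted in the text before Theorem \ref{T-Basic}), so I already have $e_n\ge 1$ for all $n$. Then the key step is the doubling step: if $q^{e}\mid v_n$ with $e\ge 1$, then since $q\mid Q$ we have $q^{n}\mid Q^n$, and hence $q^{\min(2e,\,n)}\mid v_{2n}=v_n^2-2Q^n$; since $n\ge 1$ and $e\ge 1$ this yields at least $q^{e+1}\mid v_{2n}$ whenever $e\le n$ (which will always hold in the regime we care about). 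Iterating the doubling step starting from $v_1$ gives $q^{\ell+1}\mid v_{2^\ell}$, hence certainly $q^{\lfloor\log_q(2^\ell)\rfloor}\mid v_{2^\ell}$ with room to spare.

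To reach a general $n$, not just powers of $2$, I would write $n=2^a m$ with $m$ odd (or handle odd $n$ directly) and use that for odd $m$, $v_{2^a}\mid v_{2^a m}$ — this is the statement that $v_j\mid v_{jk}$ for $k$ odd, which follows from the factorization $v_j\mid(\al^{jk}+\be^{jk})$ when $k$ is odd, analogous to \eqref{E-prod} for $u$. From the power-of-two case we have $q^{a+1}\mid v_{2^a}\mid v_n$, and since $n=2^a m\le 2^a\cdot n$ forces $\lfloor\log_q n\rfloor\le\lfloor\log_2 n\rfloor$ only when $q=2$; for $q\ge 3$ we have $\lfloor\log_q n\rfloor\le\lfloor\log_2 n\rfloor$ automatically, and I must check $a+1\ge\lfloor\log_q n\rfloor$, i.e. that the $2$-adic valuation plus one dominates $\log_q n$. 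This is \emph{not} true in general (e.g. $n=q$ has $a=0$ but $\lfloor\log_q n\rfloor=1$), so the naive ``reduce to powers of $2$'' argument is insufficient, and that is exactly the main obstacle.

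The fix, and the real content of the proof, is to run the induction multiplicatively in a way that tracks \emph{all} prime factors of $n$, not just $2$. The clean approach: prove the stronger statement that $q^{1+v_q(n)}\mid v_n$ by combining (i) the above doubling estimate at the prime $2$ with (ii) a ``$q$-multiplication'' estimate showing that passing from $v_n$ to $v_{qn}$ gains one more factor of $q$, using $v_{qn}=v_n\cdot W$ where $W$ is (up to sign) a value of the $u$- or $v$-sequence in $\al^n,\be^n$ and is divisible by $q$ because $q\mid Q^n$ divides the relevant symmetric functions — more concretely $v_{qn}\equiv v_n^{q}\pmod{q\,\integers[\al]}$-type congruences, or the explicit expansion of $\al^{qn}+\be^{qn}$ in terms of $v_n$ and $Q^n$. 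Once one has $q^{1+v_q(n)}\mid v_n$ for all $n$, the lemma follows since $v_q(n)\ge$ is irrelevant and instead one notes $n\le q^{?}$... no: one finishes by observing that $\lfloor\log_q n\rfloor$ counts at most the exponents available, i.e. for $n=\prod p^{a_p}$ one has $\lfloor\log_q n\rfloor\le\sum_p a_p\lfloor\log_q p\rfloor$ and each such factor contributes via the multiplication formula $v_{pm}=v_m\cdot(\text{divisible by }p^{?})$; assembling these across all prime-power steps in a factorization-chain of $n$ (as in the proof of Theorem \ref{S-Basic}(b)) yields the exponent bound $\lfloor\log_q n\rfloor$. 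I expect the bookkeeping of exactly how many factors of $q$ are gained at each multiplication step — and verifying it never falls short of $\lfloor\log_q n\rfloor$ — to be the delicate part; everything else is formula-pushing with $v_{2n}=v_n^2-2Q^n$ and the odd-multiplier divisibility $v_j\mid v_{jk}$.
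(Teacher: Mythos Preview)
Your proposal never reaches a complete argument, and the route you are on is far harder than the problem warrants. The attempt to assemble $q$-powers via doubling formulas, odd-multiplier divisibility $v_j\mid v_{jk}$, and a multiplicative chain through the prime factorisation of $n$ breaks down at the last step: the inequality you write, $\lfloor\log_q n\rfloor\le\sum_p a_p\lfloor\log_q p\rfloor$ for $n=\prod_p p^{a_p}$, is false in general (e.g.\ $q=2$, $n=9$ gives left side $3$, right side $2$). So the ``assembling across all prime-power steps'' that you defer to bookkeeping cannot be carried out as stated, and you have not identified a replacement.

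The paper's proof sidesteps all of this by using the recurrence $v_{n+2}=Pv_{n+1}-Qv_n$ directly. Since $q\mid P$ and $q\mid Q$, a one-line induction on $n$ gives $q^{e_n}\mid v_n$ with
\[
e_n=\begin{cases}\lfloor n/2\rfloor+1&\text{if }q=2,\\\lfloor(n+1)/2\rfloor&\text{if }q\ge 3,\end{cases}
\]
the extra $+1$ when $q=2$ coming from $v_0=2$. This exponent grows \emph{linearly} in $n$, so the logarithmic bound $e_n\ge\lfloor\log_q n\rfloor$ is immediate from $\lfloor(n+1)/2\rfloor\ge\lfloor\log_q n\rfloor$. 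No multiplicative identities for $v_n$ are needed at all.
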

\begin{proof}
From the recurrence, it is easy to see that we can take 
$$
e_n=\begin{cases} \left\lfloor\frac{n}{2}\right\rfloor+1 \text{ if } q=2\\
\left\lfloor\frac{n+1}{2}\right\rfloor \text{ if } q\ge 3,
\end{cases}$$
the slightly higher value for $q=2$ coming from the fact that $v_0=2$. Then use
$\left\lfloor\log_q n\right\rfloor\le\left\lfloor\frac{n+1}{2}\right\rfloor$.
\end{proof}

We then immediately obtain the following.

\begin{cor}[{Special case of {Somer \cite[Theorem 5(i)]{MR1393479}}}]\label{C-spec} If $n$ is a product of special primes then it belongs to $T$.
\end{cor}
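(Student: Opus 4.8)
The plan is to read this off directly from Lemma \ref{L-2q}. Write $n$ as a product of prime powers $n=\prod_{i=1}^{s}q_i^{a_i}$, where the $q_i$ are distinct special primes and each $a_i\ge 1$; the case $n=1$ (the empty product) is trivial, since $1\mid v_1=P$, so $1\in T$. The key observation is that for each $i$, the exponent $a_i$ cannot exceed the power of $q_i$ guaranteed to divide $v_n$ by Lemma \ref{L-2q}.

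Concretely, fix $i$. From $q_i^{a_i}\mid n$ we get $a_i\le\log_{q_i}n$, and since $a_i$ is an integer this forces $a_i\le\lfloor\log_{q_i}n\rfloor$. Applying Lemma \ref{L-2q} to the special prime $q_i$ gives $q_i^{e_n}\mid v_n$ with $e_n\ge\lfloor\log_{q_i}n\rfloor\ge a_i$, hence $q_i^{a_i}\mid v_n$. Since the primes $q_1,\dots,q_s$ are pairwise coprime, it follows that $n=\prod_{i=1}^{s}q_i^{a_i}\mid v_n$, i.e.\ $n\in T$.

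There is essentially no obstacle here: the only point requiring a little care is that the exponent written as $e_n$ in Lemma \ref{L-2q} in fact depends on the prime $q$ as well as on $n$ (this dependence being suppressed in the notation), so the inequality $e_n\ge\lfloor\log_{q_i}n\rfloor$ must be invoked separately for each $q_i$. Once this exponent bookkeeping is done correctly, pairwise coprimality of the $q_i$ immediately yields the divisibility $n\mid v_n$, completing the proof.
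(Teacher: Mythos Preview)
Your proof is correct and is exactly the argument the paper intends: the paper simply says the corollary follows ``immediately'' from Lemma~\ref{L-2q}, and you have spelled out that immediate deduction (the exponent bound $a_i\le\lfloor\log_{q_i}n\rfloor$ combined with pairwise coprimality). Your remark that the exponent $e_n$ in Lemma~\ref{L-2q} implicitly depends on the prime $q$ is a useful clarification of the paper's notation.
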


We can now extend  Theorem \ref{T-down} as follows.
\begin{prop} \label{P-32} If $\ell\ge 2$ and $2^\ell\cdot 3\in T$, then $2^\ell\in T$.
\end{prop}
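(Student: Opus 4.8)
The plan is to reduce the claim $2^\ell\cdot 3\in T\Rightarrow 2^\ell\in T$ to a statement about $2$-adic valuations, following the same pattern as the proof of Proposition \ref{P-23}. Suppose $P$ is odd (the case $P$ even is covered by Corollary \ref{C-nm}, since then $2$ is not special only if $Q$ is odd, but if both were odd $2$ would divide neither... so more care is needed there; in fact when $P$ is even we can invoke Theorem \ref{T-down} or Corollary \ref{C-nm} directly). Assuming $P$ odd, I would first record, using the product formula $v_{2k}=v_k^2-2Q^k$ and $u_{2k}=u_kv_k$ together with \eqref{E-prod}, an expression for $v_{2^\ell\cdot 3}$ analogous to the one in Proposition \ref{P-23}: namely $v_{2^\ell\cdot 3}=v_{12}^{(2^{\ell-2})}$ for $\ell\ge 2$, so that the $2$-adic condition $2^\ell\mid v_{2^\ell\cdot 3}$ becomes $2^\ell\mid v_{12}^{(2^{\ell-2})}$. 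Since $P^{(2)}=P^2-2Q$ is odd when $P$ is odd, all $v_{2^j}=P^{(2^j)}$ are odd, and one computes $v_6\pmod 4$ exactly as in Lemma \ref{L-12684}(i): $v_6\equiv 1\pmod 4$ if $Q$ is even and $v_6\equiv 2\pmod 4$ if $Q$ is odd.

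The heart of the argument is then a $2$-adic descent for $v$ paralleling Lemma \ref{L-12684}(i). From $v_{12}=v_6^2-2Q^6$ and the congruence $v_6\equiv 1$ or $2\pmod 4$, I would show: if $Q$ is even then $v_{12}$ is odd, so $2^\ell\mid v_{12}^{(2^{\ell-2})}$ forces $\ell\le 0$, contradicting $\ell\ge 2$ — hence this subcase is vacuous and we are left with $Q$ odd; if $Q$ is odd then $v_6\equiv 2\pmod 4$, so $v_6^2\equiv 4\pmod{16}$ (indeed $v_6=2m$ with $m$ odd gives $v_6^2=4m^2\equiv 4\pmod{32}$), while $2Q^6\equiv 2\pmod{4}$ more precisely $2Q^6\equiv 2\pmod{16}$, so $v_{12}=v_6^2-2Q^6\equiv 2\pmod 4$, i.e.\ $v_2\|v_{12}$ and similarly $v_2\|v_6$. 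The key point is that $v_2$ exactly divides $v_{12}^{(k)}$ for every odd... but here $2^{\ell-2}$ is even, so I must instead track the valuation of $v_{12}^{(2^{\ell-2})}$ directly: writing $v_{12}^{(2)}=v_{24}=v_{12}^2-2Q^{12}$ and iterating, since $v_{12}\equiv 2\pmod 4$ one gets $v_{24}\equiv 2\pmod 4$ again, and by induction $v_{12}^{(2^j)}\equiv 2\pmod 4$ for all $j\ge 0$. Therefore $2^\ell\mid v_{12}^{(2^{\ell-2})}$ is impossible for $\ell\ge 2$ unless... this shows the hypothesis $2^\ell\cdot 3\in T$ with $P$ odd, $\ell\ge 2$ already forces a contradiction unless I have mis-set the $v$ versus $u$ bookkeeping.

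I expect the main obstacle to be exactly this bookkeeping at the prime $2$: unlike the $S$-case, $v_n$ does \emph{not} divide $v_{2n}$, so \eqref{E-prod} is not available for $v$ at even multipliers and one must use $v_{2k}=v_k^2-2Q^k$ repeatedly and control the $2$-adic valuation through the squaring, which behaves quite differently from the multiplicative behaviour exploited in Proposition \ref{P-23}. Once the correct identity relating $v_{2^\ell\cdot 3}$ to lower terms is pinned down, I would split on the parity of $Q$, handle $Q$ even by showing the relevant quantity is odd (making the case vacuous or trivial), and for $Q$ odd carry out the descent $v_{2^\ell\cdot 3}\rightsquigarrow v_{2^{\ell-1}\cdot 3}$ via a lemma of Lemma \ref{L-12684}(i) type, then invoke the analogue of Lemma \ref{L-12684}(ii) for $v$ (or note that the $3$-divisibility is automatic here because $3\in T$ when $Q\equiv -1\pmod 3$, and otherwise use Corollary \ref{C-Q}) to conclude $3\mid v_{2^{\ell-1}\cdot 3}$, giving $2^{\ell-1}\cdot 3\in T$ and hence by downward induction $2^\ell\in T$... wait, the target is $2^\ell\in T$, not $12\in T$, so the final step is rather: from $2^\ell\cdot 3\in T$ peel off the factor $3$ using that $3$ is an odd prime and Theorem \ref{T-down} applies once $p_{\max}=3$ is reached, i.e.\ $2^\ell\cdot 3\in T\Rightarrow 2^\ell\in T$ provided $\ell\ge 2$ takes us out of the exceptional shape $2^\ell\cdot 3$ — and indeed $2^\ell$ with $\ell\ge 2$ is not of that shape, so Theorem \ref{T-down} yields the result directly once we know $2^\ell\cdot 3\in T$ and $\ell\ge 1$, with the content of the proposition being precisely that the exceptional case in Theorem \ref{T-down} can be bypassed here. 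I would therefore structure the final write-up around verifying that the Theorem \ref{T-down} exception is harmless, using the $2$-adic computations above to show $2^\ell\mid v_{2^\ell}$ follows from $2^\ell\cdot 3\in T$.
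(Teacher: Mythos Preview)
Your proposal actually contains the two facts needed for a complete proof, but you don't assemble them and your closing paragraph undoes your own progress. Concretely: (i) for $P$ even, the exception in Theorem~\ref{T-down} does not apply (it requires $P$ odd), so $n=2^\ell\cdot 3\in T$ gives $n/p_{\max}=2^\ell\in T$ immediately; (ii) for $P$ odd, your $2$-adic computation is correct and shows $v_{3\cdot 2^\ell}\equiv 2\pmod 4$ for all $\ell\ge 2$ (with $v_{3\cdot 2^\ell}$ odd when $Q$ is even), so $2^\ell\cdot 3\in T$ is impossible and the implication is vacuous. That is a full proof. The problem is that you mistrust your own computation (``unless I have mis-set the $v$ versus $u$ bookkeeping'') and then, in the last paragraph, try to invoke Theorem~\ref{T-down} on $n=2^\ell\cdot 3$ itself, arguing that ``$2^\ell$ is not of that shape''. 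But the exception in Theorem~\ref{T-down} concerns the shape of $n$, not of $n/p_{\max}$; since $n=2^\ell\cdot 3$ \emph{is} of the excluded form when $P$ is odd, that appeal is circular and is exactly what Proposition~\ref{P-32} is meant to bypass.

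The paper's proof takes a quite different and shorter route. Instead of descending via the doubling relation $v_{2k}=v_k^2-2Q^k$, it uses the tripling identity
\[
v_{3L}=v_L\bigl(v_L^{\,2}-3Q^L\bigr),\qquad L=2^\ell,
\]
to compare the $2$-adic valuations of $v_{3L}$ and $v_L$ directly. After disposing of the case where $2$ is special (Corollary~\ref{C-spec}) and observing that $Q$ must then be odd, one checks: if $v_L$ were odd then, since $L$ is even and $Q$ odd, $v_L^{\,2}-3Q^L\equiv 1-3\equiv 2\pmod 4$, forcing $2\,\|\,v_{3L}$, which contradicts $2^\ell\mid v_{3L}$ for $\ell\ge 2$. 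Hence $v_L$ is even, and then $v_L^{\,2}-3Q^L$ is odd, so the $2$-part of $v_{3L}$ equals that of $v_L$; from $2^\ell\mid v_{3L}$ one gets $2^\ell\mid v_L$, i.e.\ $L\in T$. This avoids any case split on the parity of $P$ and replaces your iterated doubling by a single multiplicative step. Your approach is salvageable (and arguably more elementary, since it piggybacks on Theorem~\ref{T-down} for the substantive case), but the tripling identity is the cleaner tool here and is worth knowing.
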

\begin{proof} Put $L=2^\ell$. If $2$ is special, then,  by Corollary \ref{C-spec}, $L\in T$ for all $\ell\ge 1$. So we can assume that $2$ is not special. We then know that $Q$ must odd, as if it were even then we would have $2\mid v_{3L}\ep{P^{3L}}{Q}$, so $P$ would be even and $2$ special.

From $L\mid v_{3L}=v_L(v_L^2-3Q^L)$ we see that if $v_L$ were odd then, as $L$ is even, $Q^L$ is a square, and so $v_L^2-3Q^L\ep{2}{4}$, giving $2^1\|v_{3L}$, a contradiction. Hence $v_L$ is even, and $L\mid v_L$.
\end{proof}

Next, we consider  the set $\mathcal P_T$
of primes that divide some $n\in T$. To set our result in context, we first need the following standard result concerning the prime divisors of the set of all Lucas numbers of the second kind. This essentially dates back to Lucas  (\cite{MR1505161}, \cite{MR1505164}, \cite{MR1505176}).   See Somer \cite[Proposition 2(iv)]{MR1393479}.

\begin{prop}\label{P-p|v_n} The set of odd prime numbers that divide some $v_n$ consists of the odd special primes, as well as all those odd nonspecial primes that do not divide $Q$ and do not divide $u_k$ for any odd $k$. Furthermore $2$ divides some $v_n$ with $n\ge 1$ if and only if $Q$ is odd.
\end{prop}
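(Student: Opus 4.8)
The plan is to describe the odd prime divisors of the sequence $(v_n)$ by means of the rank of appearance, after a short case split according to how $p$ divides $P$ and $Q$, and to treat $p=2$ separately by reducing the recurrence modulo $2$. For an odd prime $p$ there are three cases. If $p$ is special, i.e. $p\mid P$ and $p\mid Q$, then $p\mid v_1=P$, which accounts for the odd special primes. If $p$ is irregular, i.e. $p\mid Q$ and $p\nmid P$, then reducing $v_{n+2}=Pv_{n+1}-Qv_n$ modulo $p$ gives $v_{n+2}\equiv Pv_{n+1}\pmod p$, so with $v_1=P$ we get $v_n\equiv P^n\pmod p$ for all $n\ge 1$, which is nonzero since $p\nmid P$; hence $p$ divides no $v_n$ with $n\ge 1$, in agreement with the right-hand side, where $p$ is excluded because $p\mid Q$. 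The remaining, substantive case is that of an odd prime $p$ with $p\nmid Q$; such a $p$ is automatically nonspecial and, not being irregular, is regular.

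So take $p$ odd with $p\nmid Q$. By Corollary \ref{C-reg}, $p$ divides some $u_n$, so the rank of appearance $\omega=\omega(p)=\min\{n\ge 1:p\mid u_n\}$ is defined. The key input is the standard divisibility property of Lucas sequences of the first kind — which holds precisely because $p\nmid Q$ — namely that $p\mid u_n$ if and only if $\omega\mid n$. I would combine this with the identity $u_{2n}=u_nv_n$ (the $k=2$ case of (\ref{E-prod}), as $u_2^{(n)}=v_n$) and with $v_n^2-\De u_n^2=4Q^n$ (obtained by expanding $(\al^n+\be^n)^2-(\al^n-\be^n)^2$). If $p\mid v_n$ then $p\mid u_{2n}$, so $\omega\mid 2n$; also $\omega\nmid n$, for $\omega\mid n$ would give $p\mid u_n$, whence $v_n^2\equiv 4Q^n\not\equiv 0\pmod p$, contradicting $p\mid v_n$. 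Conversely, if $\omega\mid 2n$ and $\omega\nmid n$ then $p\nmid u_n$ while $p\mid u_nv_n$, so $p\mid v_n$. Hence $p$ divides some $v_n$ with $n\ge 1$ exactly when the conditions $\omega\mid 2n$ and $\omega\nmid n$ can be met simultaneously.

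Write $\omega=2^a m$ with $m$ odd. If $a=0$ there is no such $n$ (then $\omega\mid 2n$ iff $\omega\mid n$), while if $a\ge 1$ then $n=\omega/2$ works; so $p$ divides some $v_n$ if and only if $\omega$ is even. On the other hand $p$ divides $u_k$ for some odd $k$ if and only if $\omega$ is odd (an odd $\omega$ divides $\omega$ itself; an even $\omega$ divides no odd number), so ``$\omega$ even'' is equivalent to ``$p$ divides no $u_k$ with $k$ odd'', which is exactly the stated condition. For $p=2$: if $P$ is odd, reduce modulo $2$; when $Q$ is even the recurrence becomes $v_{n+2}\equiv v_{n+1}\pmod 2$, so $v_n\equiv 1\pmod 2$ for all $n\ge 1$ and $2$ divides no $v_n$ with $n\ge 1$, whereas when $Q$ is odd it becomes $v_{n+2}\equiv v_{n+1}+v_n\pmod 2$, and from $(v_0,v_1,v_2,v_3)\equiv(0,1,1,0)\pmod 2$ we get $2\mid v_3$. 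If $P$ is even then $2\mid v_1=P$ (and if $Q$ is also even then $2$ is special, the $p=2$ counterpart of the special-prime case). This establishes the ``furthermore'' clause.

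I expect the main obstacle to be bookkeeping rather than conceptual: one needs the fact ``$p\mid u_n\iff\omega(p)\mid n$'' available with $p\nmid Q$ as the only hypothesis (its proof rests on the addition and subtraction identities for $(u_n)$, and it genuinely fails for special primes, where $p\mid Q$), and then the short $2$-adic analysis converting ``$\omega$ even'' into ``$p$ divides no $u_k$ with $k$ odd''. The special, irregular and $p=2$ cases are immediate from the recurrence.
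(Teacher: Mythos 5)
Your proof is correct, but it takes a genuinely different route from the paper in the substantive case of an odd prime $p\nmid Q$. The paper argues via two facts imported from Ribenboim: every prime not dividing $Q$ divides some $u_n$, together with the factorization $u_{2^rk}=u_kv_kv_{2k}\cdots v_{2^{r-1}k}$ for the forward direction, and $\gcd(u_k,v_n)\in\{1,2\}$ for $k$ odd (stated for $\gcd(P,Q)=1$, then transported to the general case by replacing $P$ with the $P^*$ of Lemma \ref{L-easy} and reducing mod $p$) for the converse; the prime $2$ is dispatched by citation. You instead work with the rank of appearance $\omega(p)$ and the law ``$p\mid u_n\iff\omega\mid n$'' (valid under the sole hypothesis $p\nmid Q$), combined with $u_{2n}=u_nv_n$ and the identity $v_n^2-\Delta u_n^2=4Q^n$, to get the clean equivalence: $p$ divides some $v_n$ iff $\omega$ is even iff $p$ divides no $u_k$ with $k$ odd. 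This buys a single unified argument for both directions, avoids the coprimality reduction, in fact yields the sharper statement that $p\mid v_n$ exactly when $n$ is an odd multiple of $\omega/2$ (the result the paper later quotes from Somer), and replaces the citation for $p=2$ with a two-line computation mod $2$. The cost is that you lean on the ``multiples of $\omega$'' law in the generality $p\nmid Q$ without gcd hypotheses; that is true and classical (it follows from $u_{m+n}=u_mu_{n+1}-Qu_{m-1}u_n$ and the fact that consecutive terms cannot share the factor $p$ when $p\nmid Q$, or alternatively by the paper's own Lemma \ref{L-easy} reduction), but since most references state it for $\gcd(P,Q)=1$ you should either give this one-paragraph justification or route it through Lemma \ref{L-easy} as the paper does for its gcd fact; the existence of $\omega$ is correctly covered by your appeal to Corollary \ref{C-reg}. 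One further remark: your mod-$2$ analysis is complete and shows that $2$ divides some $v_n$ ($n\ge1$) iff $P$ is even or $Q$ is odd; this agrees with the ``furthermore'' clause only when $2$ is not special, so the clause as literally stated needs $P,Q$ not both even (a caveat you flag parenthetically, and which the paper's statement glosses over by citing a source that assumes $\gcd(P,Q)=1$). Make that restriction explicit rather than asserting the clause outright.
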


\begin{proof} First note that all special primes divide all Lucas numbers $u_n$ for $n>1$. Next, if $p$ divides $Q$ but not $P$, then $v_n\ep{P^{n}}{p}$. So suppose $p$ is a nonspecial prime that does not divide $u_k$ for any $k$ odd. Now, since it is known (see \cite[p. 51]{MR1352481}) that a  prime $p$ that does not divide $Q$ divides some $u_n$, we must have $n$ even, say $n=2^rk$, with $k$ odd. Then
$$
p\mid u_n=u_kv_kv_{2k}v_{2^2k}\dots v_{2^{r-1}k}
$$
and since $p$ does not divide $u_k$, it must divide some $v_{{2^j}k}$.

Conversely, suppose that the odd prime $p$ divides some $v_n$. In the case $\gcd(P,Q)=1$, we have by \cite[equation (2.13)]{MR1352481} that $\gcd(u_k,v_n)=1$ or $2$ for $k$ odd. Hence $p$ cannot divide any $u_k$ with $k$ odd. In the general case $\gcd(P,Q)>1$ we apply the same result to the Lucas sequences $(u_n^*)$, $(v_n^*)$ with parameters $P^*$ and $Q$, where $P^*$ is as in Lemma \ref{L-easy}. Since these new sequences are congruent to the old ones mod $p$, we have for $k$ odd that $u_k\equiv u_k^*\not\equiv 0\pmod{p}$.

The result for the prime $2$ comes from  \cite[p. 50]{MR1352481}.
\end{proof}

Denote by $\mathcal P_{\operatorname{2nd}}$ the primes dividing some $v_n$, as described by the previous proposition.

Clearly $\mathcal
P_T$ is a subset of $\mathcal P_{\operatorname{2nd}}$. As for $P_S$ in $\mathcal P_{\operatorname{1st}}$, it would be interesting to prove that it is always a proper subset. Indeed, it again seems pretty clear why this should be the case. Take $p\in\mathcal P_{\operatorname{2nd}}$, not dividing $Q$,  with $p$ having even rank of appearance $\omega(p)$ (in $(u_n)$). Then $p\mid v_n$ precisely when $n$ is an odd multiple of $\omega(p)/2$ -- see Somer \cite[Proposition 2(vii)]{MR1393479}. Thus if $\omega(p)$ has an odd prime divisor $q$ that is not in $\mathcal P_{\operatorname{2nd}}$, and $q\mid n$, then we cannot possibly have $n\mid v_n$. So it remains only to prove that there always are such primes. It seems clear, for instance by looking at examples (like those in  Section \ref{S-Ex}), that there will always be many of these primes, resulting in $\mathcal P_T$ being a thin subset of $\mathcal P_{\operatorname{2nd}}$. But a proof of this is lacking at present.

Our next lemma is an easy exercise. Dickson \cite[pp.67, 271]{MR0245499} traces the result back to an `anonymous writer' in 1830 \cite{Anon}, and also to Lucas \cite[p. 229]{MR1505164}.

\begin{lemma} \label{L-binom} For $p$ an odd prime and $j=1,2,\dots,(p-1)/2$, the expression $B_j:=\binom{p-1}{j}-(-1)^j$ is divisible by $p$. 
\end{lemma}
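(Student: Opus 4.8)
The plan is to prove the congruence $\binom{p-1}{j}\equiv(-1)^j\pmod p$ for $j=0,1,\dots,p-1$ by a direct telescoping argument on the binomial coefficients. The key observation is the identity
\[
\binom{p-1}{j}=\binom{p}{j}-\binom{p-1}{j-1},
\]
valid for $1\le j\le p-1$. Since $p$ is prime, $p\mid\binom{p}{j}$ for $1\le j\le p-1$, so modulo $p$ this gives the recursion $\binom{p-1}{j}\equiv-\binom{p-1}{j-1}\pmod p$. Starting from $\binom{p-1}{0}=1$ and iterating yields $\binom{p-1}{j}\equiv(-1)^j\pmod p$, which is precisely the claim that $p\mid B_j$. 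Note that although the lemma is stated only for $j\le(p-1)/2$, the argument works for all $j$ in the range $0\le j\le p-1$; restricting to $j\le(p-1)/2$ is harmless (and in fact follows from the symmetry $\binom{p-1}{j}=\binom{p-1}{p-1-j}$ together with $(-1)^j\equiv-(-1)^{p-1-j}$, consistent with the general statement).

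First I would record the elementary fact that $p\mid\binom{p}{j}$ for $1\le j\le p-1$: writing $\binom{p}{j}=\frac{p}{j}\binom{p-1}{j-1}$, the integer $\binom{p}{j}$ is $p$ times the rational $\frac1j\binom{p-1}{j-1}$, and since $\gcd(j,p)=1$ the prime $p$ must divide $\binom{p}{j}$. Next I would invoke Pascal's rule in the form displayed above. Then the induction on $j$ is immediate: the base case $j=0$ is $\binom{p-1}{0}=1=(-1)^0$, and the inductive step is the one-line computation $\binom{p-1}{j}\equiv-\binom{p-1}{j-1}\equiv-(-1)^{j-1}=(-1)^j\pmod p$.

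There is essentially no main obstacle here — this is, as the text says, an easy exercise. The only point requiring the tiniest bit of care is making sure the range of $j$ in Pascal's rule matches what is needed (one needs $1\le j\le p-1$ so that both $\binom{p}{j}$ is divisible by $p$ and $\binom{p-1}{j-1}$ makes sense), and this is automatically satisfied since the lemma only concerns $1\le j\le(p-1)/2$. An alternative route, if one prefers, is to use the formula $\binom{p-1}{j}=\prod_{i=1}^{j}\frac{p-i}{i}\equiv\prod_{i=1}^{j}\frac{-i}{i}=(-1)^j\pmod p$, where the congruence is interpreted after clearing the denominators $\prod i$ (which are prime to $p$); but the telescoping proof via Pascal's rule is cleanest and I would present that one.
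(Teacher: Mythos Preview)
Your proof is correct. The paper itself does not give a proof of this lemma at all: it is stated as ``an easy exercise'' with references to Dickson and Lucas, and no argument is supplied. Your telescoping argument via Pascal's rule (or the equivalent product formula you mention as an alternative) is the standard way to verify this and would be entirely appropriate to fill in the omitted details.
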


The following result dates back to Lucas \cite[p. 210]{MR1505164} and Carmichael \cite[Theorem X]{MR1516755}.
\begin{lemma} \label{L-pP} 
\begin{enumerate}
\item[(i)] For $n\in\mathbb N$ and any prime $p$, $p$ divides $v_{np}$ if and only if $p$ divides $v_n$.
\item[(ii)] For $n\in\mathbb N$ and any odd prime $p$, $v_n$ divides $v_{np}$ and $v_{np}/v_n\ep{v_n^{p-1}}{p}$. 
\end{enumerate}
\end{lemma}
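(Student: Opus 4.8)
The plan is to reduce statement (ii) to a polynomial congruence and then read statement (i) off as a corollary. Fix an odd prime $p$ and work in the polynomial ring $\integers[x,y]$, where
$$\Phi(x,y):=\frac{x^p+y^p}{x+y}=\sum_{i=0}^{p-1}(-1)^i x^i y^{\,p-1-i}$$
is a genuine polynomial because $p$ is odd. Reindexing $i\mapsto p-1-i$ and using that $p-1$ is even shows $\Phi(y,x)=\Phi(x,y)$, so $\Phi$ is a symmetric polynomial with integer coefficients, hence expressible in $x+y$ and $xy$.

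The first step is the key congruence $\Phi(x,y)\equiv(x+y)^{p-1}\pmod p$ in $\integers[x,y]$. Expanding $(x+y)^{p-1}=\sum_{j=0}^{p-1}\binom{p-1}{j}x^j y^{\,p-1-j}$, the coefficient of $x^j y^{\,p-1-j}$ in $\Phi(x,y)-(x+y)^{p-1}$ is $(-1)^j-\binom{p-1}{j}=-B_j$ in the notation of Lemma \ref{L-binom}. That lemma gives $p\mid B_j$ for $1\le j\le(p-1)/2$; the values $j=0$ and $j=p-1$ give $B_j=0$; and for $(p-1)/2<j<p-1$ one has $B_j=B_{p-1-j}$ (again because $p-1$ is even), so $p\mid B_j$ for every $j$. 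Hence every coefficient of $\Phi(x,y)-(x+y)^{p-1}$ is divisible by $p$, i.e. $\Phi(x,y)-(x+y)^{p-1}=p\,g(x,y)$ with $g\in\integers[x,y]$ symmetric.

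Next I would specialise $x=\al^n$, $y=\be^n$. Since $\Phi$ is a symmetric integer polynomial, $W:=\Phi(\al^n,\be^n)$ is a symmetric function of the conjugate (rational or quadratic) integers $\al^n,\be^n$, hence a rational integer; and by construction $v_n\cdot W=\al^{np}+\be^{np}=v_{np}$, so $v_n\mid v_{np}$. Applying the polynomial congruence after substitution, $W-v_n^{p-1}=p\,g(\al^n,\be^n)$, and $g(\al^n,\be^n)$ is again a rational integer, so $v_{np}/v_n=W\equiv v_n^{p-1}\pmod p$; this is (ii). Finally (i): for $p$ odd, write $v_{np}=v_nW$ with $W\equiv v_n^{p-1}\pmod p$; if $p\mid v_n$ then $p\mid v_{np}$ since $v_n\mid v_{np}$, while if $p\mid v_{np}=v_nW$ then $p\mid v_n$ or $p\mid W$, and in the latter case $p\mid v_n^{p-1}$ forces $p\mid v_n$. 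The prime $p=2$ is not covered by (ii), but there $v_{2n}=v_n^2-2Q^n\equiv v_n^2\pmod 2$, so $2\mid v_{2n}$ iff $2\mid v_n$.

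I do not expect a serious obstacle here; the only points that need care are verifying that $\Phi$ (hence $g$) is symmetric, so that its specialisation at $(\al^n,\be^n)$ is a rational integer and the mod-$p$ congruence survives the substitution, and extending Lemma \ref{L-binom} from the stated range $1\le j\le(p-1)/2$ to all coefficients. One could instead derive $v_n\mid v_{np}$ and the congruence directly from the expansion of $(\al^n+\be^n)^p$ together with $p\mid\binom pj$ for $1\le j\le p-1$, but the route above makes the role of Lemma \ref{L-binom} transparent.
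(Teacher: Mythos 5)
Your proof is correct and is essentially the paper's own argument: the congruence $\Phi(x,y)\equiv(x+y)^{p-1}\pmod p$ obtained from Lemma \ref{L-binom} (extended to all $j$ via $B_j=B_{p-1-j}$) is exactly the identity the paper uses for (ii), where the correction term is written explicitly as $\sum_j B_jQ^jv_{p-1-2j}$ rather than handled via symmetry of $g$, and both proofs then specialise by replacing $\al,\be$ with $\al^n,\be^n$. The only organisational difference is that you deduce (i) for odd $p$ as a corollary of (ii), while the paper proves (i) directly from $v_n^p\equiv v_{np}\pmod p$ using $p\mid\binom{p}{j}$ --- the alternative route you yourself note at the end.
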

\begin{proof} \begin{enumerate}
\item[(i)] Now $v_2=v_1^2-2Q$, which is even iff $v_1$ is even.   Also, for $p\ge 3$,
\begin{align}\label{E-pP}
v_1^p=(\al +\be)^{p}
&=v_p+\sum_{j=1}^{(p-1)/2}\binom{p}{j}Q^jv_{p-2j}\ep{v_p}{p}.
\end{align}
Now replace $\al,\be$ by $\al^n,\be^n$.
\item[(ii)] Taking $p$ odd and $B_j$ defined as in Lemma \ref{L-binom}, we have
\begin{align*}
v_p&=(\al+\be)(\al^{p-1}-\al^{p-2}\be+\dots+\be^{p-1})\\
&=(\al+\be)((\al+\be)^{p-1}-\sum_{j=1}^{p-2}B_j\al^{p-1-j}\be^j)\\
&=v_1\left(v_1^{p-1}-\sum_{j=1}^{(p-3)/2}B_jQ^jv_{p-1-2j}-B_{(p-1)/2}Q^{(p-1)/2}\right).
\end{align*}
so that the result of $p$ odd follows by replacing $\al,\be$ by $\al^n,\be^n$ and using Lemma \ref{L-binom}.
\end{enumerate}
\end{proof}

\section{Proof of Theorem \ref{T-Basic}}
We now prove part (a) of Theorem \ref{T-Basic}. First take $p$ odd and $n\in T$. Then, by Lemma \ref{L-pP}(i), if $p\nmid v_n$ then $p\nmid v_{np}$, so $np\not\in T$. Conversely, if $p^\lambda\| v_n$ for some $\lambda\ge 1$ then by Lemma \ref{L-pP}(ii) $p^{\lambda+1}\mid v_{np}$. Since $n\mid v_n$ and $v_n\mid v_{np}$ we have $np\in T$.

Now take $p=2$, and suppose that both $n$ and $2n$ are in $T$. First note that $v_n$ must be even, as otherwise $v_{2n}=v_n^2-2Q^n$ would be odd. Also, we  have $n\mid Q^n$, so that every prime factor $q$ of $n$ divides $Q$. (Note that this works too if $q=2$, as then $4\mid v_{2n}$.) But $q$ must also divide $P$, as otherwise $v_n\equiv P^n\not\equiv 0\pmod{q}$. Hence $q$ is special, and $n$ is a product of special primes. If $n$ is even, then $2$ is special, so $P$ and $Q$ are both even. Alternatively, because $v_n$ is even, we must have either $P$ even and $Q$ odd or (from the recurrence) $P$ and $Q$ both odd and $3\mid n$. So we have either $P$ even or $Q$ odd and $3\mid n$.

 Conversely, assume that $n\in T$ is a product of special primes, and either $P$ is even or ($Q$ is odd and $3\mid n$). We know from Corollary \ref{C-spec} that every product of special primes is in $T$. So if $2$ is special, then $2n\in T$. So we can assume $2$ is not special, and hence that $n$ is odd. If $P$ is even, then, from the recurrence, all the $v_k$, in particular $v_n$ and $v_{2n}$, are even. Also, if $P$ and $Q$ are both odd
 and  $3\mid n$, then $v_n$ and $v_{2n}=v_n^2-2Q^n$ are both even. Since for every prime factor $q$ of $n$  with $q^\lambda\|n$ we have $\lambda\le \log_q n<n$, so that $n\mid Q^n$. Hence $2n\mid v_{2n}$, $2n\in T$.

To prove part (b): we see easily from Theorem \ref{T-down} and Proposition \ref{P-32}
 that the only possible (second
kind) basic numbers are $1$ and $6$. To find the conditions on $P$
and $Q$ that make $6$ basic, we assume that $6\in T$ but $2\notin
T$, $3\notin T$. Then $v_2=P^2-2Q$ is odd, so $P$ odd. Also $3\nmid
v_3=P(P^2-3Q)$, so $P\ep{\pm 1}{6}$. From $6\mid
v_6=v_2(v_2^2-3Q^2)$ we have $Q$ odd and $3\mid v_2\ep{1-2Q}{3}$, so
that $Q\ep{-1}{6}$. Conversely, if $P\ep{\pm 1}{6}$ and $Q\ep{-1}{6}$ then it is easily checked that $6$ is basic. This proves part (b).

The proof of part (c) is just the same as that for Theorem \ref{S-Basic}(c).

\section{Finiteness results for $S$ and for $T$.}\label{S-finite}
In this section we look at when $S$, $\mathcal P_S$, and $T$, $\mathcal P_T$ are finite. The results given here are essentially reformulations of results of Somer \cite{MR1271392}, \cite{MR1393479}.

\begin{theorem} \label{infinite} The set $S$ is finite if and only if $\De=1$, in which case $S=\{1\}$. For $S$ infinite, $\mathcal P_S$ is finite when $Q=0$ and $P\ne 0$, in which case $\mathcal P_S$ consists of the prime divisors of $P$. Otherwise, $\mathcal P_S$ is also infinite. 
Furthermore,
$\mathcal P_S$  is the set $\mathcal P$ of all primes if and only if
every prime divisor of $Q$ is special. (This includes the case $Q=\pm 1$.)
\end{theorem}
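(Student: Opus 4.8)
The plan is to analyze Theorem \ref{infinite} in four pieces, corresponding to its four assertions, relying heavily on the characterization of $\mathcal P_S$ as $\bigcup_{n\in S}\mathcal P_{S,n}$ together with Theorem \ref{S-Basic}(a), which says $\mathcal P_{S,n}$ is precisely the set of primes dividing $u_n\De$.

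First I would settle the finiteness of $S$. Recall from Corollary \ref{C-mn} (or the remark after it) that $\De\in S$ always, and more generally $u_n\De\in S$ whenever $n\in S$. If $\De\ne 1$, pick any prime $p\mid\De$; then $p\in\mathcal P_{S,1}$, so $p\in S$, and then $p^2\mid u_p\De$ would let us climb further — more carefully, since $p\in S$ and every prime factor of $p$ divides $p\mid u_p\De$ (using $p\mid\De$), Corollary \ref{C-mn} gives $p^2\in S$, and inductively $p^k\in S$ for all $k$, so $S$ is infinite. Conversely if $\De=1$ then $\al,\be$ are units and one checks $u_n\De=u_n$ has no prime factors forcing growth; in fact $\De=1$ forces $P^2-4Q=1$, and the standard fact (Lemma \ref{L-pD} with $\De=1$: no prime divides any $u_p$ beyond... ) — here I would instead argue directly that $\mathcal P_{S,1}=\emptyset$ since the primes dividing $u_1\De=1$ form the empty set, so $1$ is the only basic element and cannot be multiplied by anything, whence $S=\{1\}$. (I should double-check the edge behavior when $\De=1$ but $6$ or $12$ could be basic; the congruence conditions in Theorem \ref{S-Basic}(c) combined with $P^2-4Q=1$ need to be ruled out, which is a short arithmetic check.)

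Next, for $S$ infinite, I would handle $\mathcal P_S$. If $Q=0$ and $P\ne 0$, then $\al=P$, $\be=0$, so $u_n=P^{n-1}$ and $\De=P^2$; thus every $u_n\De$ is a power of $P$, so by Theorem \ref{S-Basic}(a) every $\mathcal P_{S,n}$ consists only of primes dividing $P$, hence $\mathcal P_S=\{p:p\mid P\}$, which is finite. In the remaining cases ($Q\ne 0$, or $Q=0=P$ — but the latter is excluded since then $S=\{1\}$... actually $P=Q=0$ gives $\De=0$ and $u_n=0$, a degenerate case I'd note separately or fold into "$S$ infinite" being false), I claim $\mathcal P_S$ is infinite. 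The key tool is Corollary \ref{C-reg}: $\mathcal P_{\operatorname{1st}}$, the set of primes dividing some $u_n$, is exactly the set of regular primes, and this set is cofinite (only finitely many primes divide $Q$, and an irregular prime must divide $Q$). So for all but finitely many primes $p$, $p$ divides some $u_n$ with $n\in\naturals$; then $n\in S$ is not automatic, but $nu_n\in S$ by the remark after Corollary \ref{C-mn}, and $p\mid u_n\mid u_{nu_n}\cdot(\text{stuff})$... more cleanly: $p$ divides $u_n$ for some $n$, and $u_n\De\in S$ with $p\mid u_n\De$, so $p\in\mathcal P_{S,1}\cup\mathcal P_{S,\dots}$ — actually simplest: $p\mid u_n$ means $p\in\mathcal P_{S,n'}$ for a suitable $n'\in S$ via Corollary \ref{C-mn}, giving $p\mid m$ for some $m\in S$, i.e. $p\in\mathcal P_S$. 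This shows $\mathcal P_S$ is cofinite, hence infinite. I expect the main obstacle here is being careful that "$p$ divides some $u_n$" genuinely upgrades to "$p\in\mathcal P_S$"; the bridge is Corollary \ref{C-mn}/\ref{C-reg} and one must make sure the relevant $n$ or a multiple lies in $S$.

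Finally, for the last assertion: $\mathcal P_S=\mathcal P$ iff every prime dividing $Q$ is special (equivalently $\gcd(P,Q)$ and $Q$ have the same prime divisors — but recall special means dividing both $P$ and $Q$; a prime dividing $Q$ is either special or irregular). By Corollary \ref{C-reg}, $\mathcal P_{\operatorname{1st}}=\mathcal P$ iff there are no irregular primes iff every prime dividing $Q$ also divides $P$, i.e. is special. So if every prime factor of $Q$ is special, $\mathcal P_{\operatorname{1st}}=\mathcal P$, and then the cofiniteness argument above in fact shows $\mathcal P_S=\mathcal P$ (every prime divides some $u_n$, hence lies in $\mathcal P_S$ as argued). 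Conversely, if some prime $p\mid Q$ is irregular (i.e. $p\nmid P$), then by Lemma \ref{L-eps} $p$ divides no $u_n$ with $n\ge1$, and $p\nmid\De$ since $\De=P^2-4Q\ep{P^2}{p}\not\ep{0}{p}$; hence by Theorem \ref{S-Basic}(a), $p\notin\mathcal P_{S,n}$ for every $n\in S$, so $p\notin\mathcal P_S$ and $\mathcal P_S\subsetneq\mathcal P$. The case $Q=\pm1$ has no prime divisors at all, so vacuously every prime dividing $Q$ is special, consistent with the parenthetical remark. The main subtlety throughout is keeping straight the three categories of primes (special, irregular, regular-nonspecial) and using Lemma \ref{L-eps} correctly: an irregular prime is excluded from $\mathcal P_S$ precisely because it divides neither any $u_n$ nor $\De$.
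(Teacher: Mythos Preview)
Your treatment of the first assertion ($S$ finite iff $\De=1$) and of the easy direction of the last assertion (an irregular prime is excluded from $\mathcal P_S$) is fine and matches the paper. But there is a genuine gap in your argument that $\mathcal P_S$ is infinite, and this gap propagates to the forward direction of the last assertion.

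You claim that $\mathcal P_S$ is cofinite: every regular prime $p$ divides some $u_n$, and you then assert this forces $p\in\mathcal P_S$. This implication is false. The difficulty is exactly the one you flag and then wave away: from $p\mid u_n$ with $n\notin S$ there is no general mechanism producing an $m\in S$ with $p\mid u_m\De$. Corollary \ref{C-mn} and the remark ``$u_n\De\in S$'' apply only for $n$ already in $S$. Concretely, look at Example~3 of the paper ($P=3$, $Q=5$): here $5$ is the only irregular prime, so $\mathcal P_{\operatorname{1st}}$ is all primes except $5$, yet $\mathcal P_S$ omits $19,29,31,47,53,59,61,79,\dots$. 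The paper even explains the obstruction right after Proposition \ref{P-allprimes}: if the rank of appearance $\omega(p)$ is divisible by an irregular prime $f$, then no multiple of $p$ can lie in $S$, so $p\notin\mathcal P_S$. Thus $\mathcal P_S$ is typically a thin subset of $\mathcal P_{\operatorname{1st}}$, not cofinite.

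The paper's route is essentially different and less elementary. For the nondegenerate case it invokes primitive prime divisor theorems (Lekkerkerker, Schinzel, Bilu--Hanrot--Voutier via Somer): since $S$ contains the infinite set $\{p^k:k\ge 0\}$ for some prime $p$, and all but finitely many $u_n$ have a primitive prime divisor, one obtains infinitely many distinct primes $q$ with $q\mid u_m$ for some $m\in S$, whence $q\in\mathcal P_{S,m}\subset\mathcal P_S$. The degenerate cases (including $\alpha/\beta$ a root of unity, not just $Q=0$) are handled separately. For the forward direction of ``$\mathcal P_S=\mathcal P$ iff no irregular primes'', the paper does \emph{not} appeal to cofiniteness at all but gives an inductive argument (Proposition \ref{P-allprimes}): one shows $2\in\mathcal P_S$ by a case analysis on $P,Q\pmod 6$, and then, assuming all primes $q<p$ lie in $\mathcal P_S$, uses Lemma \ref{L-eps} ($p\mid u_{p\pm 1}$) together with Lemma \ref{L-lcm} and Corollary \ref{C-mn} to build an element of $S$ divisible by $p$. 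Your proposed shortcut bypasses this, but it rests on the false cofiniteness claim and so does not go through.
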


For the proof, we note first that when $\De=1$, $\al$ and $\be$ are consecutive integers, and $1$ is the only basic element.
But there are no primes $p$ dividing $u_1\De=1$, so $\mathcal P_1$ is empty, and $S=\{1\}$. In all other
cases, $|u_1\De|>1$,  $\mathcal P_{S,1}$ is nonempty, with $p\in\mathcal P_{S,1}$ say, and then, by Corollary \ref{C-mn}, $p^k\in S$ for all $k\ge 0$,   making $S$ infinite.

Now assume $S$ is infinite. We recall that $(u_n)_{n\ge 0}$ is called {\it degenerate} if $Q=0$
or $\al/\be$ is a root of unity. (The latter alternative includes the case $P=0$, $Q\ne 0$.) We consider the two cases $(u_n)$ degenerate or nondegenerate separately.  If  $(u_n)$ is degenerate, then by \cite[Theorem 9]{MR1271392}
either
\begin{itemize}
\item  $P\ne 0$ and $Q=0$, so that then $S$ consists of those $n$ whose prime factors all divide $P$, and $\mathcal P_S$ is the set of  prime divisors of $P$;

or

\item for some $r=1,2,3,4$ or $6$, $S$ has a subset $rk\quad(k\in \mathbb N)$ where $u_{rk}=0$, so that $\mathcal P_S=\mathcal P$.
\end{itemize}

Now consider the case of $(u_n)$ nondegenerate. Then, by Somer \cite[Theorem
1]{MR1271392}, all but finitely many $u_n$ have a primitive prime divisor (a prime dividing
$u_n$ that do not divide $u_m$ for any $m<n$). So, using Theorem \ref{S-Basic}(a), $\mathcal P_S$ is infinite. Somer's theorem is based on results of Lekkerkerker \cite{MR0055373} and Schinzel \cite{MR0139567}. In fact Bilu, Hanrot
and Voutier \cite{MR1863855} have proved that for such sequences with no special primes every $u_n$ with
$n>30$ has a primitive divisor. They also listed exceptions with
$n\le 30$. Hence $u_{p^k}$ has a primitive prime divisor for all
sufficiently large $k$, making $\mathcal P_S$ infinite.   See  Abouzaid \cite{MR2289425} for corrections to their
list. Also Stewart \cite{MR0491445} and Shorey and Stewart \cite{MR602235} gave lower bounds for the largest prime divisor of $u_n$. 
We mention in passing a  contrasting  result of Everest,
Stevens, Tamsett and Ward \cite{MR2309982}, who exhibited a cubic
linear recurrence for which infinitely many of the resulting
sequence had no primitive divisor.

This proof will be complete after we have proved the following. 
While this result is contained in Somer \cite[Theorem
8]{MR1271392}, we give another proof here.
\begin{prop}\label{P-allprimes} The set $\mathcal P_S$ is the whole of $\mathcal P$ if and
only if all primes are regular.
\end{prop}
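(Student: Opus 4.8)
The plan is to prove the two implications separately, the substantive one being that regularity of all primes forces $\mathcal P_S=\mathcal P$.

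For the easy direction I argue contrapositively: if some prime $p$ is irregular, then by Lemma~\ref{L-eps} (equivalently, Corollary~\ref{C-reg}) $p$ divides no $u_n$ with $n\ge1$, so no $n\in S$ can be divisible by $p$, since $p\mid n$ would force $p\mid n\mid u_n$. Hence $p\notin\mathcal P_S$ and $\mathcal P_S\ne\mathcal P$, which is the contrapositive of ``$\mathcal P_S=\mathcal P\Rightarrow$ all primes regular''.

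For the converse, assume every prime is regular. The plan is to prove the stronger assertion that \emph{for every prime $p$ and every $a\ge1$ some element of $S$ is divisible by $p^a$}; taking $a=1$ then gives $p\in\mathcal P_S$ for all $p$. I would prove this by strong induction on $p$. A key ingredient of the inductive step, call it $(*)$, is a consequence of the induction hypothesis together with Lemma~\ref{L-lcm}: if every prime factor of $m$ is strictly less than $p$, then some element of $S$ is divisible by $m$ --- take the least common multiple of witnesses for the prime-power parts of $m$, which stays in $S$ by Lemma~\ref{L-lcm}. Now fix $p$. If $p\mid\De$, then Corollary~\ref{C-mn} with $n=1$ gives $p^a\in S$ for all $a$, and there is nothing more to do. If $p\nmid\De$, then $p$ is not special (special primes divide $\De$) and, being regular, not irregular, so $p\nmid Q$; hence for $p$ odd Lemma~\ref{L-eps} gives $p\mid u_e$ with $e:=p-\left(\frac{\De}{p}\right)\in\{p-1,\,p+1\}$, a number all of whose prime factors are $<p$ (clear for $p-1$; and $p+1$ is even with $(p+1)/2<p$). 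Applying $(*)$ to $e$ yields $N\in S$ with $e\mid N$; then $u_e\mid u_N$ by (\ref{E-prod}), so $p\mid u_N$, and Corollary~\ref{C-mn} with this $N$ and $m=p^a$ gives $Np^a\in S$ for every $a\ge1$.

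The case $p=2$ is the real obstacle and must serve as the base of the induction, because the relevant index ($\omega(2)$, which equals $3$ when $P$ and $Q$ are odd) can have a prime factor exceeding $2$, so the reduction above is unavailable. If $P$ is even then $2\mid\De$ and we are in the case already settled; if $P$ is odd then $Q$ must be odd as well (else $2$ is irregular), and in that case I would show directly that $12\in S$: from $u_{12}=u_6v_6$ by (\ref{E-prod}) and $u_6=P(P^2-Q)(P^2-3Q)$ one sees $4\mid u_6\mid u_{12}$ when $P,Q$ are odd, while $3\mid u_{12}$ because regularity of $3$ forces $3\mid u_k$ for some $k\in\{2,3,4\}$ (by Lemmas~\ref{L-pD} and~\ref{L-eps}, using that $3\nmid Q$ whenever $3\nmid\De$), and each such $k$ divides $12$. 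Since $2\mid u_{12}$, Corollary~\ref{C-mn} with $n=12$ then gives $12\cdot 2^a\in S$ for all $a$, completing the base case and hence the induction. I expect the only genuine subtlety to be this treatment of the prime $2$ together with the need to manufacture members of $S$ divisible by prescribed prime powers; what makes the latter painless --- avoiding any lifting-the-exponent estimate --- is exactly that Corollary~\ref{C-mn} lets one pass from $n\in S$ to $np^a\in S$ in one step as soon as $p\mid u_n\De$.
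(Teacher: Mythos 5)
Your proof is correct, and its overall architecture matches the paper's: the easy direction via Corollary \ref{C-reg}, and the converse by induction over the primes, with Lemma \ref{L-eps} supplying an index $p\pm 1$ all of whose prime factors are smaller than $p$, Lemma \ref{L-lcm} combining witnesses, and the prime $2$ treated separately through the number $12$. The differences are in execution, and they mostly work in your favour. First, you strengthen the inductive statement to prime powers (some element of $S$ is divisible by $p^a$ for every $a$), so that Lemma \ref{L-lcm} and Corollary \ref{C-mn} do all the work; the paper keeps the statement ``$q\in\mathcal P_S$'', boosts exponents with Corollary \ref{C-mn} afterwards, and closes the inductive step with the slightly fiddly identity $u_{pk'(p+\varepsilon)}=u_{pk'}^{(p+\varepsilon)}u_{p+\varepsilon}$, whereas your direct appeal to Corollary \ref{C-mn} with $n=N$, $m=p^a$ is cleaner. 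Second, you dispose of the case $p\mid\De$ explicitly (Corollary \ref{C-mn} with $n=1$), so the hypotheses $p\nmid Q$, $p\nmid\De$ of Lemma \ref{L-eps} are genuinely available when you invoke it; the paper's inductive step is silent on this point. Third, and most notably, your base case replaces the paper's four-way analysis of $(P,Q)$ modulo $6$, which cites Theorem \ref{S-Basic}(c) in two of the cases, by a uniform direct argument: regularity of $2$ forces $P,Q$ odd, giving $4\mid u_6\mid u_{12}$, and regularity of $3$ gives $3\mid u_k$ for some $k\in\{2,3,4\}$ by Lemmas \ref{L-pD} and \ref{L-eps}, hence $3\mid u_{12}$ and $12\in S$. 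This avoids any reliance on the classification of basic elements; the only thing the paper's base case yields that yours does not is the simultaneous conclusion $3\in\mathcal P_S$, which you do not need since $p=3$ falls to your general inductive step (with $e\in\{2,4\}$).
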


\begin{proof}
First note that if there are any irregular primes then, by Corollary \ref{C-reg}, $\mathcal P_S$, being a subset of $\mathcal P_{\operatorname{1st}}$, cannot be the whole of $\mathcal P$.

Conversely, assume all primes are regular, so that any prime factor $p$ of $Q$ also divide $P$.
Note that then $p\mid \Delta$.
To show that all primes belong to $\mathcal P_S$, we proceed by
induction. We first show that $2\in\mathcal P_S$. If $u_2=P$ is
even, then $2\in S$, $2\in\mathcal P_S$. So we can take $P$ odd. Then
$Q$ must be odd, too, by our assumption. Then $u_3=P^2-Q$ is even,
and hence so is $u_6=u_3v_3$.We claim that either $3\mid u_6$,
in which case $6\in S$, $2,3\in \mathcal P_S$, or $12\in S$, with
the same implication.
\begin{itemize}
\item If $P\ep{3}{6}$, $Q\ep{3}{6}$, then $3\mid u_n$ for all $n\ge
2$, so that $3\mid u_6$.
\item If $P\ep{3}{6}$, $Q\ep{\pm 1}{6}$, then $6$ is basic, by
Theorem \ref{S-Basic}(c).
\item If $P\ep{\pm 1}{6}$, $Q\ep{-1}{6}$, then $12$ is basic, by
Theorem \ref{S-Basic}(c).
\item If $P\ep{\pm 1}{6}$, $Q\ep{1}{6}$, then $3\mid u_3$ and so
$3\mid u_3v_3=u_6$.
\end{itemize}
Hence $2\in\mathcal P_S$, as claimed.

We now assume that $q\in \mathcal P_S$ for every prime $q<p$, where
$p$ is a prime at least $3$. We have just shown that this is true
for $p=3$. By Lemma \ref{L-lcm}, we know that there is a positive integer $k$ such that $k\prod_{q<p}q\in S$; hence, by Corollary \ref{C-mn}, $k\prod_{q<p}q^{e_q}\in S$ for any exponents $e_q$.

By Lemma \ref{L-eps}, $p\mid u_{p+\varepsilon}$, where $\varepsilon=\pm 1$. As $p>2$, all factors of $p+\varepsilon$ are less than $p$ so, by the induction hypothesis, $k(p+\varepsilon)\in S$ for some $k$. Now put $k'=k/p$ if $p\mid k$, and $k'=k$ otherwise. Then, using (\ref{E-prod}), we have
$$
u_{pk'(p+\varepsilon)}=u_p^{(k'(p+\varepsilon))}u_{k'(p+\varepsilon)}=u_{pk'}^{(p+\varepsilon)}u_{p+\varepsilon},
$$
so that $pk'(p+\varepsilon)\in S$, $p\in\mathcal P_S$. This proves the
induction step.
\end{proof}

On the other hand, if there are irregular primes, then in general $\mathcal P_S$ will be a proper subset of $\mathcal P_{\operatorname{1st}}$. For an idea of why this should be the case, take an irregular prime $f$, and suppose that $p$ is a prime whose rank of appearance $\omega(p)$ is a multiple of $f$. Then if $n=kp$ were in $S$,
we would have $u_{kp}\ep{0}{p}$, so that $\omega(p)$, and hence $f$, divides $kp$. Hence $f$ divides $u_n$, a contradiction. Thus we have shown that no prime whose rank of appearance is a multiple of $f$ will belong to $\mathcal P_S$. The problem of showing that there are {\it any} of these primes, let alone infinitely many, seems a difficult one, though computationally they are easy to find for a particular $P$ and $Q$.

We now consider the finiteness (or otherwise) of $T$ and $\mathcal P_T$.

\begin{theorem} [{{Somer \cite[Theorems 8,9]{MR1393479}}}]\label{T-finite}
The set $T$ is finite in the following two cases:
\begin{itemize}
\item $P=\pm 1$, $Q\not\equiv -1\pmod{6}$, in which case $T=\{1\}$;
\item $P=\varepsilon_1 2^k$, $Q=2^{2k-1}+\varepsilon_2$, where $k$ is a positive integer, and $\varepsilon_1$,
$\varepsilon_2\in\{-1,1\}$, in which case $T=\{1,2\}$.
\end{itemize}
Otherwise, $T$ is infinite. For $T$ infinite, $\mathcal P_T$ is
finite precisely when $P,Q$ are not both $0$ and either 
\begin{itemize}
\item $P^2=Q$, in which case $\mathcal P_T$ is the set of prime divisors of $2P$

or 
\item $P^2=4Q$ or $Q=0$, in which case $\mathcal P_T$ is the set of prime divisors of $P$.  
\end{itemize}
Otherwise, for $T$ infinite, $\mathcal P_T$ is also infinite. 
\end{theorem}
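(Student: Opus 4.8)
The plan is to mirror the structure of the proof of Theorem \ref{infinite}, treating the finiteness of $T$ and of $\mathcal P_T$ in turn, and in each case separating a degenerate situation from a nondegenerate one. For the finiteness of $T$ itself, I would first dispose of the nondegenerate case: by Somer's primitive divisor results (and the explicit Bilu--Hanrot--Voutier bound, applied to $(v_n)$ via the factorization $u_{2n}=u_nv_n$), all but finitely many $v_n$ have a primitive prime divisor, so $\mathcal P_T$ — and hence $T$ — is infinite unless $(v_n)$ is essentially degenerate. In the degenerate case $Q=0$, $P\ne 0$ one has $v_n=P^n$, so $T$ is all $n$ with prime factors dividing $P$, which is infinite once $|P|>1$; the case $|P|=1$, $Q=0$ gives $v_n=\pm 1$ so $T=\{1\}$, consistent with the first bullet. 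When $\al/\be$ is a root of unity the $v_n$ are eventually periodic (up to sign and powers), and one checks by the small-order cases $r\in\{1,2,3,4,6\}$ exactly which $(P,Q)$ force $T$ to be $\{1\}$ or $\{1,2\}$; this is where the two exceptional families $P=\pm 1$, $Q\not\equiv -1\pmod 6$ and $P=\varepsilon_1 2^k$, $Q=2^{2k-1}+\varepsilon_2$ appear. One must also rule out that $6$ ever re-enters $T$ in the first family (Theorem \ref{T-Basic}(c) shows $6\in T$ forces $Q\equiv -1\pmod 6$) and that $4$ ever enters in the second family (Theorem \ref{T-Basic}(a): $4\notin T$ unless $2$ is special, i.e. $P,Q$ both even, which the stated congruences preclude).

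For the finiteness of $\mathcal P_T$ when $T$ is infinite, the strategy is again: in the nondegenerate case the primitive-divisor argument forces $\mathcal P_T$ infinite, since $v_{p^k}$ picks up a new prime for all large $k$ and each such prime lies in $\mathcal P_T$ by Theorem \ref{T-Basic}(a) applied along a chain ending at a power of that prime. So $\mathcal P_T$ is finite only in degenerate cases. If $Q=0$, $P\ne 0$ then $v_n=P^n$ and $\mathcal P_T$ is exactly the primes dividing $P$. If $P^2=4Q$ (so $\De=0$) then $\al=\be$ and $v_n=2\al^n$ with $\al=P/2$ an integer, so $v_n$'s odd part has prime divisors exactly those of $P$, and the parity analysis of Theorem \ref{T-Basic}(a) shows $2\in\mathcal P_T$ precisely when $P$ is even, so again $\mathcal P_T$ is the set of prime divisors of $P$. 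The genuinely new case is $P^2=Q$: here $\al/\be=-1$ is a root of unity, $v_{2k+1}=0$ and $v_{2k}=2(-P)^{2k}\cdot(\text{something})$ — more precisely $\al\be=P^2$ and $\al+\be=P$ forces $\{\al,\be\}$ with $\al/\be=-1$, giving $v_n=2(\al\be)^{n/2}=2(-1)^{n/2}P^n$ for $n$ even and $0$ for $n$ odd; then every even $n$ whose odd part avoids bad primes lies in $T$ (using Theorem \ref{T-Basic}(a) and $2\mid v_n$), so $\mathcal P_T$ is the set of prime divisors of $2P$. One then checks, using Proposition \ref{P-p|v_n}, that outside these families $\mathcal P_{\operatorname{2nd}}$ itself is already infinite and, via the chain-construction of Theorem \ref{T-Basic}(b), that infinitely many of its primes actually reach $T$.

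The main obstacle, as in the $S$ case, is not the nondegenerate part (that is handed to us by Somer, Bilu--Hanrot--Voutier, and Theorem \ref{T-Basic}), but the bookkeeping in the degenerate and $\De=0$ cases: one must be careful that the anomalous behaviour of the prime $2$ in Theorem \ref{T-Basic}(a) — in particular that $4\notin T$ unless $2$ is special, and that $2\in\mathcal P_{T,n}$ only for $n$ a product of special primes — does not secretly enlarge or shrink $T$ beyond the claimed finite sets. Concretely, in the second exceptional family for $T$ one has to verify both that $2\in T$ (so $T\supseteq\{1,2\}$, via $2\mid v_2=P^2-2Q=\varepsilon_1^2 2^{2k}-2(2^{2k-1}+\varepsilon_2)=-2\varepsilon_2$, which is indeed $2\mid v_2$) and that no further $n$ works: $3\notin T$ because the congruences give $P\equiv\pm 2^k$, $Q\equiv 2^{2k-1}+\varepsilon_2\pmod 3$ and a short check kills $3\mid v_3=P(P^2-3Q)$ unless $3\mid P$, which the form excludes; and $4\notin T$ since $2$ is not special here ($Q$ is odd). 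Assembling these parity checks cleanly, and matching them against the explicit parametrizations Somer extracted, is the delicate part; everything else is a direct transcription of the argument for Theorem \ref{infinite} with $u_n$ replaced by $v_n$ and $\De$ replaced by the relevant invariant.
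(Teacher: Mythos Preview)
Your plan for the finiteness of $T$ contains a genuine error. You argue that in the nondegenerate case primitive divisors force $\mathcal P_T$, and hence $T$, to be infinite, so that the two exceptional families must arise from degeneracy ($\al/\be$ a root of unity). Both steps fail. First, the primitive-divisor step is circular: knowing that $v_n$ has a primitive prime divisor for large $n$ only helps if that $n$ already lies in $T$, since Theorem~\ref{T-Basic}(a) lets you adjoin an odd prime $p$ to $n\in T$ only when $p\mid v_n$. Second, and decisively, the family $P=\varepsilon_1 2^k$, $Q=2^{2k-1}+\varepsilon_2$ is in general \emph{not} degenerate: for instance $P=2$, $Q=3$ gives $P^2/Q=4/3$, so $\al/\be$ is not a root of unity, yet $T=\{1,2\}$. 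Thus nondegeneracy does not imply $T$ infinite, and your proposed case split cannot produce the second family.

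The paper's argument for when $T$ is finite is entirely elementary and uses no primitive divisors. The key observation is that if some $n\in T$ has an odd prime factor $p$, then $p\mid n\mid v_n$, so by Theorem~\ref{T-Basic}(a) we have $np\in T$, and iterating gives $np^k\in T$ for all $k\ge 0$. Hence $T$ is finite only if every element of $T$ is a power of $2$. Starting from $1\in T$, one looks at the odd prime factors of $v_1=P$ (forcing $P=\pm 1$ or $P=\pm 2^k$), then at those of $v_2=P^2-2Q$, and combines this with the $p=2$ clause of Theorem~\ref{T-Basic}(a) and the basic-element criterion of Theorem~\ref{T-Basic}(c); the two listed families drop out directly. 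Degeneracy enters only in the second half of the theorem, for $\mathcal P_T$, once $T$ is already known to be infinite. (Incidentally, your treatment of $P^2=Q$ is also off: there $\al/\be$ is a primitive cube root of unity, not $-1$, and $v_n$ never vanishes; one has $v_n=c_nP^n$ with $c_n\in\{\pm 1,\pm 2\}$ depending on $n\bmod 6$, from which $\mathcal P_T$ equal to the set of primes dividing $2P$ follows.)
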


\begin{proof} If $T$ contains an integer $n$
having an odd prime factor $p$ then, by Theorem \ref{T-Basic}(a),
$p^kn\in T$ for all $k\ge 0$. In particular, if $P=\pm 1$ and
$Q\equiv -1\pmod{6}$, then $6\in T$, so that $T$ is infinite. On the
other hand, if $P=\pm 1$ and $Q\not\equiv -1\pmod{6}$, then $1$ is
the only basic element of $T$, and $v_1=P$ has no prime factors so
that, by Theorem \ref{T-Basic}(a), $\mathcal P_1$ is empty, and
hence $T=\{1\}$.

Again starting with $1\in T$, we see that $T$ is infinite if $P$ has any odd prime factors. Also, $T$ is infinite if $P$ is $\pm$ a positive power of $2$ and $2$ is special, as then $2^k\in T$ for all $k\ge 0$, by Theorem \ref{T-Basic}(a).

It therefore remains only to consider the case of $P=\pm 2^k$, $k\ge 1$ and $Q$ odd, so that $2$ is not special.
 Then $2\in T$ and $4\notin T$, by Theorem \ref{T-Basic}(a). If $v_2$ has an odd prime factor $p$,
 then $2p^k\in T$ for all $k\ge 0$, so that $T$ is again infinite. Finally, if $v_2$ is $\pm$ a power of $2$,
 then $T=\{1,2\}$. This happens only when $v_2=2^{2k}-2Q=\pm 2$, so that $Q=2^{2k-1}\mp 1$, as claimed.
 
 Now take $T$ infinite, with $P,Q$ not both $0$. If the sequence $(v_n)$ is degenerate, then, using Somer \cite[Theorem 9]{MR1393479}, we get either $P^2=Q$, $P^2=4Q$ or $Q=0$, and $\mathcal P_T$ being the set of prime divisors of $P$, as required. On the other hand, if $(v_n)$ is not degenerate then by Somer \cite[Theorem
1]{MR1393479} for sufficiently large $n$ every $v_n$ has a primitive prime divisor. Hence we can find an infinite sequence of numbers $n$ in  $T$ such that $np$ is again in $T$, where $p$ is a primitive prime divisor of $v_n$.
(Here we are using Theorem \ref{T-Basic}(a).) Thus $\mathcal P_T$ then contains infinitely many primes $p$.
 \end{proof}

\section{Divisibility properties of $S$ and of $T$.} \label{S-divisibility}
From Theorem \ref{S-Basic} we can consider $S$ as spanned by
a forest of one or two trees, with each node corresponding to an element of $S$, and the root nodes being $\{1\}$,
 $\{1,6\}$ or $\{1,12\}$. Each edge can be labelled $p$; it rises from a node  $n\in S$ to a node $np\in S$, where $p$ is some prime divisor  of $u_n\Delta$. Thus every node above $n$ in the tree is divisible by $n$. Then call a {\it cutset} of the forest a set $C$ of nodes with the property that every path from a root to infinity must contain some vertex of the cutset. Then we clearly have the following.
\begin{prop} For a cutset $C$ of $S$, every element of $S$ either lies below $C$, or it is divisible by some node
of $C$.
\end{prop}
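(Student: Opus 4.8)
The plan is to unwind the definitions and observe that the statement is essentially immediate from the tree structure of $S$ described just before it. Recall that a node $n\in S$ lies above (strictly or not) a node $m\in S$ precisely when $m\mid n$ and there is a path in the forest from $m$ up to $n$; in that case every node on the path between them is in $S$ and is divisible by $m$. A cutset $C$ is, by definition, a set of nodes meeting every path from a root to infinity.

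First I would fix an arbitrary $n\in S$ and consider the unique downward path $P_n$ from $n$ to its root, namely $n, n/p_r, n/(p_rp_{r-1}),\dots, b$, where $b\in\{1,6,12\}$ is the appropriate basic element and the $p_i$ are the primes from Theorem \ref{S-Basic}(b). I would then distinguish two cases according to whether or not this finite path $P_n$ meets $C$.

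If $P_n\cap C\ne\emptyset$, pick a node $c\in P_n\cap C$. Since $c$ lies on the downward path from $n$ to the root, $n$ lies above $c$ in the tree, and hence $c\mid n$; that is, $n$ is divisible by a node of $C$. If instead $P_n\cap C=\emptyset$, I would extend $P_n$ to an infinite path from the root through $n$ and onwards to infinity; such an extension exists because from any node $m\in S$ one can always rise to $mp\in S$ for a prime $p\mid u_m\Delta$ (Corollary \ref{C-mn}), so $S$ has no maximal elements above any given node. Since $C$ is a cutset, this infinite path must contain some vertex $c\in C$; but the portion of the path below $n$ is exactly $P_n$, which by assumption avoids $C$, so $c$ must lie strictly above $n$, i.e. $n$ lies strictly below $C$. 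Combining the two cases gives the claim.

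I do not expect a genuine obstacle here: the only point requiring a word of care is the construction of the infinite upward path, which relies on the fact (Corollary \ref{C-mn}, applied with $m$ a suitable prime power, or simply the observation that $u_m\Delta$ always has a prime factor once $|u_m\Delta|>1$, which holds for every $m\ge 2$ and also for $m=1$ unless $\Delta=1$) that $S$ is closed under multiplication by at least one prime at every node; in the degenerate case $\Delta=1$ we have $S=\{1\}$ and the statement is vacuous. The rest is a direct translation of the cutset definition, so the proof is short.
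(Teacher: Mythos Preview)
Your argument is correct and is precisely the unwinding of the ``clearly'' that the paper offers in lieu of a proof: extend the root-to-$n$ path to an infinite path, use the cutset property to find some $c\in C$ on it, and then split according to whether $c$ lies at or below $n$ (giving $c\mid n$) or strictly above $n$ (giving $n$ below $C$). The only care needed, which you handle, is that such infinite upward extensions exist whenever $S\ne\{1\}$.
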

Judicious choice of a cutset places severe divisibility restrictions on elements of $S$, and so, using this, one can search for elements of $S$ up to an given bound very efficiently. 

The same argument applies equally to $T$, using Theorem
\ref{T-Basic}, with $p$ being either an odd prime divisior of $v_n$
or, under the conditions described in that theorem, the prime $2$. For instance, applying this idea to the sequence $T$ of example 2 below, every element of that sequence not a power of 3 is divisible either by 171 or 243 or 13203 or 2354697 or 10970073 or 22032887841. See \cite{BS} for details.

\section{Examples}\label{S-Ex}
\begin{enumerate}
\item[1.] $P=1,Q=-1$ (the classical Fibonacci and Lucas numbers.) Here $\Delta=5$,
$$
S=1, 5, 12, 24, 25, 36, 48, 60, 72, 96, 108, 120, 125, 144, 168, 180,\dots ,
$$
with $1$ and $12$ basic (A023172 on Neil Sloane's Integer Sequence website), while $\mathcal P_S$ is the whole of $\mathcal P$ (see Theorem \ref{infinite}),
$$
T=1, 6, 18, 54, 162, 486, 1458, 1926, 4374, 5778, 13122, 17334, \dots,
$$
with $1$ and $6$ basic (A016089), and
$$
\mathcal P_{\text{2nd}} =2, 3, 7, 11, 19, 23, 29, 31, 41, 43, 47, 59, 67, 71, 79, 83, 101, 103, 107, 127,\dots,
$$
(A140409) of which $\mathcal P_T$ is a subsequence:
$$
\mathcal P_T=2, 3, 107, 1283, 8747, 21401, 34667, 46187,\dots,
$$
(A016089, see (see Theorem \ref{T-finite}).
\item[2.] $P=3, Q=2$, where $u_n=2^n-1$, $v_n=2^n+1$. Here $S=\{1\}$ as $\Delta=1$, and
$$
T=1, 3, 9, 27, 81, 171, 243, 513, 729, 1539, 2187, 3249,\dots,
$$
with $1$ basic (A006521).   Also
$$
\mathcal P_{\text{2nd}} =3, 5, 11, 13, 17, 19, 29, 37, 41, 43, 53, 59, 61, 67, 83, 97, 101, 107, 109,\dots,
$$
(A014662 -- see also A091317), of which
$$
\mathcal P_T=3, 19, 163, 571, 1459, 8803, 9137, 17497, 41113, \dots
$$
(A057719) is a subsequence. Note too that, by Proposition \ref{P-AJ} and the fact that all $n\in T$ are odd, we have $T=S(-1,-2)$. Also $S=T(-1,-2)=\{1\}$.

\item[3.] $P=3$, $Q=5$, $\Delta=-11$,
$$
S=1, 6, 11, 12, 18, 24, 36, 48, 54, 66, 72, 96, 108, 121, 132, 144, 162, 168, 192, 198,\dots
$$
with $1$ and $6$ basic, with $\mathcal P_{\operatorname{1st}}$ consisting of all primes except the irregular prime $5$, and
$$
\mathcal P_S= 2, 3, 7, 11, 13, 17, 23, 37, 41, 43, 67, 71, 73, 83, 89, 97, 101, 103, 107, 113, \dots .
$$
Also 
$$
T=1, 3, 9, 27, 81, 153, 243, 459, 729, 1377, 2187, 2601, 4131, 4401, 6561, 7803, \dots
$$
with only $1$ basic, 
$$
\mathcal P_{\operatorname{2nd}} = 2, 3, 7, 13, 17, 19, 23, 37, 43, 47, 53, 67, 73, 79, 83, 97, 103, 107, 113, \dots
$$
and
$$
\mathcal P_T = 2, 3, 17, 103, 163, 373, 487, 1733,\dots .
$$
\end{enumerate}

\section{Final remarks.}\label{S-final}

\begin{enumerate}
\item[1.] It would be interesting to see whether the analysis of the paper could be extended to other second-order recurrence sequences, or indeed to any recurrences of higher order.
\item[2.] In \cite{BS}, what we called `primitive' solutions of $n\mid 2^n+1$ should in fact have been called {\it fundamental} solutions, following Jarden \cite[p. 70]{MR0197383} and Somer \cite[p. 522]{MR1271392}, \cite[p. 482]{MR1393479}. However, this definition has been superseded by the notion of a basic element (of $S$ or of $T$) as in this paper.
\item[3.] In example 1 of Section \ref{S-Ex} above we have $24$ and $25\in S=S(1,-1)$. Are these the only consecutive integers in $S(1,-1)$?
\end{enumerate}
\bibliographystyle{plain}
\bibliography{Smyth}

\begin{thebibliography}{10}

\bibitem{MR2289425}
Mourad Abouzaid.
\newblock Les nombres de {L}ucas et {L}ehmer sans diviseur primitif.
\newblock {\em J. Th\'eor. Nombres Bordeaux}, 18(2):299--313, 2006.

\bibitem{MR1131414}
Richard Andr{\'e}-Jeannin.
\newblock Divisibility of generalized {F}ibonacci and {L}ucas numbers by their
  subscripts.
\newblock {\em Fibonacci Quart.}, 29(4):364--366, 1991.

\bibitem{BS}
Toby Bailey and Chris Smyth.
\newblock Primitive solutions of $n\mid 2^n+1$.
\newblock 2pp., linked from \newline{\tt
  http://www.research.att.com/$\sim$njas/sequences/A006521}, 2008.

\bibitem{MR1863855}
Yu. Bilu, G.~Hanrot, and P.~M. Voutier.
\newblock Existence of primitive divisors of {L}ucas and {L}ehmer numbers.
\newblock {\em J. Reine Angew. Math.}, 539:75--122, 2001.
\newblock With an appendix by M. Mignotte.

\bibitem{MR1516755}
R.~D. Carmichael.
\newblock Note {O}n {A} {R}ecent {P}roblem {I}n {T}he {A}merican {M}athematical
  {M}onthly.
\newblock {\em Amer. Math. Monthly}, 14(1):8--9, 1907.

\bibitem{MR1502458}
R.~D. Carmichael.
\newblock On the numerical factors of the arithmetic forms {$\alpha\sp
  n\pm\beta\sp n$}.
\newblock {\em Ann. of Math. (2)}, 15(1-4):30--70, 1913/14.

\bibitem{MR0245499}
Leonard~Eugene Dickson.
\newblock {\em History of the theory of numbers. {V}ol. {I}: {D}ivisibility and
  primality.}
\newblock Chelsea Publishing Co., New York, 1966.

\bibitem{MR2309982}
Graham Everest, Shaun Stevens, Duncan Tamsett, and Tom Ward.
\newblock Primes generated by recurrence sequences.
\newblock {\em Amer. Math. Monthly}, 114(5):417--431, 2007.

\bibitem{MR1990179}
Graham Everest, Alf van~der Poorten, Igor Shparlinski, and Thomas Ward.
\newblock {\em Recurrence sequences}, volume 104 of {\em Mathematical Surveys
  and Monographs}.
\newblock American Mathematical Society, Providence, RI, 2003.

\bibitem{MR0349567}
Verner~E. Hoggatt, Jr. and Gerald~E. Bergum.
\newblock Divisibility and congruence relations.
\newblock {\em Fibonacci Quart.}, 12:189--195, 1974.

\bibitem{MR0197383}
Dov Jarden.
\newblock Divisibility of {F}ibonacci and {L}ucas numbers by their subscripts.
\newblock In {\em Recurring sequences: {A} collection of papers}, Second
  edition. Revised and enlarged, pages 68--75. Riveon Lematematika, Jerusalem
  (Israel), 1966.

\bibitem{MR0055373}
C.~G. Lekkerkerker.
\newblock Prime factors of the elements of certain sequences of integers. {I},
  {II}.
\newblock {\em Nederl. Akad. Wetensch. Proc. Ser. A. {\bf 56} = Indagationes
  Math.}, 15:265--276, 277--280, 1953.

\bibitem{MR1505176}
Edouard Lucas.
\newblock Th\'eorie des {F}onctions {N}um\'eriques {S}implement
  {P}\'eriodiques.
\newblock {\em Amer. J. Math.}, 1(4):289--321, 1878.

\bibitem{MR1505161}
Edouard Lucas.
\newblock Th\'eorie des {F}onctions {N}um\'eriques {S}implement
  {P}\'eriodiques.
\newblock {\em Amer. J. Math.}, 1(2):184--196, 1878.

\bibitem{MR1505164}
Edouard Lucas.
\newblock Th\'eorie des {F}onctions {N}um\'eriques {S}implement
  {P}\'eriodiques.
\newblock {\em Amer. J. Math.}, 1(3):197--240, 1878.

\bibitem{MR1352481}
Paulo Ribenboim.
\newblock The {F}ibonacci numbers and the {A}rctic {O}cean.
\newblock In {\em Proceedings of the 2nd {G}auss {S}ymposium. {C}onference {A}:
  {M}athematics and {T}heoretical {P}hysics ({M}unich, 1993)}, Sympos.
  Gaussiana, pages 41--83, Berlin, 1995. de Gruyter.

\bibitem{MR0139567}
Andrzej Schinzel.
\newblock The intrinsic divisors of {L}ehmer numbers in the case of negative
  discriminant.
\newblock {\em Ark. Mat.}, 4:413--416 (1962), 1962.

\bibitem{MR602235}
T.~N. Shorey and C.~L. Stewart.
\newblock On divisors of {F}ermat, {F}ibonacci, {L}ucas and {L}ehmer numbers.
  {II}.
\newblock {\em J. London Math. Soc. (2)}, 23(1):17--23, 1981.

\bibitem{MR1271392}
Lawrence Somer.
\newblock Divisibility of terms in {L}ucas sequences by their subscripts.
\newblock In {\em Applications of {F}ibonacci numbers, {V}ol. 5 ({S}t.
  {A}ndrews, 1992)}, pages 515--525. Kluwer Acad. Publ., Dordrecht, 1993.

\bibitem{MR1393479}
Lawrence Somer.
\newblock Divisibility of terms in {L}ucas sequences of the second kind by
  their subscripts.
\newblock In {\em Applications of {F}ibonacci numbers, {V}ol.\ 6 ({P}ullman,
  {WA}, 1994)}, pages 473--486. Kluwer Acad. Publ., Dordrecht, 1996.

\bibitem{MR0491445}
C.~L. Stewart.
\newblock On divisors of {F}ermat, {F}ibonacci, {L}ucas, and {L}ehmer numbers.
\newblock {\em Proc. London Math. Soc. (3)}, 35(3):425--447, 1977.

\bibitem{Wa}
Gary Walsh.
\newblock On integers $n$ with the property $n\mid f_n$.
\newblock 5pp., unpublished, 1986.

\bibitem{Anon}
Anonymous Writer.
\newblock Th\'eor\`emes et probl\`emes sur les nombres.
\newblock {\em J. Reine Angew. Math.}, 6:100--106, 1830.

\end{thebibliography}

\end{document}